\newcommand\cyr{%
\renewcommand\rmdefault{wncyr}%
\renewcommand\sfdefault{wncyss}%
\renewcommand\encodingdefault{OT2}%
\normalfont \selectfont} \DeclareTextFontCommand{\textcyr}{\cyr}
\newcommand{\be}{\begin{equation}}
\newcommand{\ee}{\end{equation}}
\newcommand{\C}{\mathbb{C}}
\newcommand{\bH}{\mathbb{H}}
\newcommand{\K}{\mathbb{K}}
\newcommand{\N}{\mathbb{N}}
\newcommand{\R}{\mathbb{R}}
\newcommand{\X}{\mathbb{X}}
\newcommand{\Y}{\mathbb{Y}}
\newcommand{\Z}{\mathbb{Z}}
\newcommand{\bS}{\boldsymbol{S}}
\newcommand{\cW}{\mathcal{W}}
\newcommand{\st}{\,\vert\,}
\newcommand{\bmu}{\boldsymbol{\mu}} 
\newcommand{\blambda}{\boldsymbol{\lambda}} 
\newcommand{\by}{\boldsymbol{y}} 
\newcommand{\cF}{\mathcal{F}}
\newcommand{\ff}{\mathfrak{f}}
\newcommand{\oX}{\overline{X}}
\newcommand{\oXi}{\overline{X}_i}
\newcommand{\oPhi}{\overline{\Phi}}
\newcommand{\of}{\overline{f}}
\newcommand{\og}{\overline{g}}
\newcommand{\ou}{\overline{u}}
\newcommand{\ox}{\overline{x}}
\renewcommand{\rmdefault}{cmr} 
\renewcommand{\sfdefault}{cmr} 
\newtheorem{theorem}{Theorem}
\theoremstyle{plain}
\newtheorem{definition}{Definition}
\newtheorem{example}{Example}
\newtheorem{lemma}{Lemma}
\newtheorem{proposition}{Proposition}
\newtheorem{remark}{Remark}
\numberwithin{equation}{section}
\newcommand{\loc}{\mathrm{loc}}
\begin{document}

\title[Local Fractal Functions and Function Spaces]{Local Fractal Functions and Function Spaces}
\author{Peter R. Massopust}
\address{Centre of Mathematics, Research Unit M6, Technische Universit\"at M\"unchen, Boltzmannstrasse 3, 85747
Garching b. M\"unchen, Germany, and Helmholtz Zentrum M\"unchen,
Ingolst\"adter Landstrasse 1, 85764 Neuherberg, Germany}
\email{peter.massopust@helmholtz-muenchen.de, massopust@ma.tum.de}

\begin{abstract}
We introduce local iterated function systems and present some of their basic properties. A new class of local attractors of local iterated function systems, namely local fractal functions, is constructed. We derive formulas so that these local fractal functions become elements of various function spaces, such as the Lebesgue spaces $L^p$, the smoothness spaces $C^n$,  the homogeneous H\"older spaces $\dot{C}^s$, and the Sobolev spaces $W^{m,p}$.
\vskip 12pt\noindent
\textbf{Keywords and Phrases:} Iterated function system (IFS), local iterated function system, attractor, fractal interpolation, Read-Bajraktarevi\'{c} operator, fractal function, Lebesgue space, H\"older space, Sobolev space
\vskip 6pt\noindent
\textbf{AMS Subject Classification (2010):} 28A80, 37C70, 41A05, 41A30, 42B35
\end{abstract}

\maketitle
\section{Introduction}\label{sec1}
Iterated function systems, for short IFSs, are a powerful means for describing fractal sets and for modeling or approximating natural objects. IFSs were first introduced in \cite{BD,Hutch} and subsequently investigated by numerous authors. Within the fractal image compression community a generalization of IFSs was proposed in \cite{barnhurd} whose main purpose was to obtain efficient algorithms for image coding. 

In \cite{BHM1}, this generalization of a traditional IFS, called a local IFS, was reconsidered but now from the viewpoint of approximation theory and from the standpoint of developing computationally efficient numerical methods based on fractal methodologies. In the current paper, we continue this former exploration of local IFSs and consider a special class of attractors, namely those that are the graphs of functions. We will derive conditions under which such local fractal functions are elements of certain function spaces which are important in harmonic analysis and numerical mathematics.

The structure of this paper is as follows. We present the traditional IFSs in Section \ref{sec2} in a more general and modern setting and state some of their properties. Section \ref{sec3} introduces local IFSs and discusses some characteristics of this newly rediscovered concept. Local fractal functions and their connection to local IFSs are investigated in Section \ref{sec4}. In Section \ref{sec5} we briefly consider tensor products of local fractal functions. Local fractal functions in Lebesgue spaces are presented in Section \ref{sec6}, in smoothness and H\"older spaces in Section \ref{sec7}, and in Sobolev spaces in Section \ref{sec8}.
\section{Iterated Function Systems}\label{sec2}
In this section, we introduce the traditional IFS and highlight some of its fundamental properties. For more details and proofs, we refer the reader to \cite{B,BD,bm,Hutch} and the references stated therein. 

Throughout this paper, we use the following notation. The set of positive integers is denoted by $\mathbb{N} := \{1, 2, 3, \ldots\}$, the set of nonnegative integers by $\N_0 = \N\cup\{0\}$, and the ring of integers by $\Z$. We denote the closure of a set $S$ by $\overline{S}$ and its interior by $\overset{\circ}{S}$. In the following, $(\mathbb{X},d_X)$ always denotes a complete metric space with metric $d_{\mathbb{X}}$.

\begin{definition}
Let $N\in\mathbb{N}$. If $f_{n}:\mathbb{X}\rightarrow\mathbb{X}$,
$n=1,2,\dots,N,$ are continuous mappings, then $\mathcal{F} :=\left(
\mathbb{X};f_{1},f_{2},...,f_{N}\right)  $ is called an \textbf{iterated
function system} (IFS).
\end{definition}

By a slight abuse of notation and terminology, we use the same symbol $\mathcal{F}$ for the
IFS, the set of functions in the IFS, and for the following set-valued mapping defined on the class of all subsets $2^\X$ of $\X.$ Define $\mathcal{F}:2^{\mathbb{X}}\rightarrow 2^{\mathbb{X}}$ by
\[
\mathcal{F}(B) := \bigcup_{f\in\mathcal{F}}f(B), \quad B\in 2^\X.
\]

Denote by $\mathbb{H=H(\X)}$ the hyperspace of all nonempty compact subsets of $\mathbb{X}$. The hyperspace $(\bH,d_\bH)$ becomes a complete metric space when endowed with the Hausdorff metric $d_{\bH}$ (cf. \cite{Engel})
\[
d_\bH (A,B) := \max\{\max_{a\in A}\min_{b\in B} d_X (a,b),\max_{b\in B}\min_{a\in A} d_X (a,b)\}.
\]
Since $\mathcal{F}\left(  \mathbb{H}\right)  \subset\mathbb{H}$, we can also treat $\mathcal{F}$ as a mapping $\mathcal{F}:\mathbb{H} \rightarrow \mathbb{H}$. When
$U\subset\mathbb{X}$ is nonempty, we may write $\mathbb{H}(U)=\mathbb{H(X)}%
\cap2^{U}$. We denote by $\left\vert \mathcal{F}\right\vert $ the number of
distinct mappings in $\mathcal{F}$.

A metric space $\mathbb{X}$ is termed \textbf{locally compact} if every point of $\X$ has a neighborhood that contains a compact neighborhood. The following information, a proof of which can be found in \cite{bm}, is foundational.
\begin{theorem}
\label{ctythm}
\begin{itemize}
\item[(i)] If $(\mathbb{X},d_{\mathbb{X}})$ is compact then $(\mathbb{H}%
,d_{\mathbb{H}})$ is compact.

\item[(ii)] If $(\mathbb{X},d_{\mathbb{X}})$ is locally compact then $(\mathbb{H}%
,d_{\mathbb{H}})$ is locally compact.

\item[(iii)] If $\mathbb{X}$ is locally compact, or if each $f\in\mathcal{F}$ is
uniformly continuous, then $\mathcal{F}:\mathbb{H\rightarrow H}$ is continuous.

\item[(iv)] If $f:\mathbb{X\rightarrow}\mathbb{X}$ is a contraction mapping for each
$f\in\mathcal{F}$, then $\mathcal{F}:\mathbb{H\rightarrow H}$ is a contraction mapping.
\end{itemize}
\end{theorem}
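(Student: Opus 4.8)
The plan is to treat the four assertions in order, first isolating a simple but pivotal estimate on finite unions that powers both (iii) and (iv). For any $A_1,\dots,A_N,B_1,\dots,B_N\in\bH$ I claim
\[
d_\bH\Bigl(\bigcup_{n=1}^N A_n,\,\bigcup_{n=1}^N B_n\Bigr)\le \max_{1\le n\le N} d_\bH(A_n,B_n),
\]
which follows directly from the definition of $d_\bH$: a point of $\bigcup_n A_n$ lies in some $A_m$, and its nearest point in $B_m$ is no farther than the right-hand side, hence it is within that distance of $\bigcup_n B_n$; the symmetric estimate handles the other one-sided distance.

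For (i) I would invoke the fact, recorded in the excerpt, that $(\bH,d_\bH)$ is complete whenever $(\X,d_\X)$ is; since a complete metric space is compact iff it is totally bounded, it suffices to prove $\bH$ is totally bounded. Given $\eps>0$, compactness of $\X$ yields a finite $\eps$-net $\{x_1,\dots,x_k\}$. The family of all nonempty subsets of this net is finite, and for any $A\in\bH$ the set $S_A:=\{x_i : d_\X(x_i,A)\le\eps\}$ is nonempty and satisfies $d_\bH(A,S_A)\le\eps$ (each $x_i\in S_A$ is within $\eps$ of $A$, and each $a\in A$ is within $\eps$ of some net point, which then lies in $S_A$). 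Thus these finitely many subsets form an $\eps$-net for $\bH$.

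For (ii), fix $A\in\bH$. Local compactness of $\X$ together with compactness of $A$ lets me cover $A$ by finitely many open sets with compact closures, producing an open $U\supset A$ with $\overline{U}$ compact. I then choose $r>0$ so small that the closed $r$-neighborhood of $A$ lies in $\overline{U}$, which is possible because $A$ is compact and contained in the open set $U$. The closed ball $\{B\in\bH : d_\bH(A,B)\le r\}$ is then contained in $\bH(\overline{U})$, is closed, and $\bH(\overline{U})$ is compact by part (i) applied to the compact space $\overline{U}$. A closed subset of a compact set is compact, and the ball contains a neighborhood of $A$, so $A$ has a compact neighborhood.

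For (iii) and (iv), the union estimate above reduces everything to controlling the single induced map $f:\bH\to\bH$, $B\mapsto f(B)$, which is well defined since continuous images of compact sets are compact. If $f$ is uniformly continuous, then given $\eps>0$ I pick $\delta>0$ with $d_\X(x,y)<\delta\folgt d_\X(f(x),f(y))<\eps$; since the relevant minima are attained on compact sets, $d_\bH(A,B)<\delta$ forces $d_\bH(f(A),f(B))\le\eps$, so the induced map is continuous. If $\X$ is only locally compact, I would localize: to prove continuity at $A$, take a compact neighborhood $K$ of $A$ as built in (ii), observe that $f|_K$ is uniformly continuous because $K$ is compact, and rerun the previous argument for $B\subset K$. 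Taking the maximum over the finitely many $f\in\mathcal{F}$ and applying the union estimate yields continuity of $\mathcal{F}$. For (iv) the single-map bound sharpens to $d_\bH(f(A),f(B))\le s_f\,d_\bH(A,B)$ straight from $d_\X(f(a),f(b))\le s_f\,d_\X(a,b)$, and the union estimate gives $d_\bH(\mathcal{F}(A),\mathcal{F}(B))\le(\max_n s_n)\,d_\bH(A,B)$ with $\max_n s_n<1$. I expect the main obstacle to be the localization step in (iii): uniform continuity is exactly what makes the induced set map continuous, so in the merely locally compact case one must first secure a compact neighborhood (hence the reliance on (ii)) before the uniform continuity of the restriction can be exploited.
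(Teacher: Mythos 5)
The paper itself contains no proof of Theorem~\ref{ctythm}: it is stated as foundational background and the proof is explicitly deferred to the reference \cite{bm}, so there is no in-paper argument to compare yours against. Judged on its own, your proposal is correct and is essentially the classical hyperspace argument. The finite-union estimate $d_\bH\bigl(\bigcup_n A_n,\bigcup_n B_n\bigr)\le\max_n d_\bH(A_n,B_n)$ is the right workhorse and legitimately reduces (iii) and (iv) to statements about a single induced map; your proof of (i) via completeness of $(\bH,d_\bH)$ (which the paper records) plus total boundedness, with the finite $\eps$-net of $\bH$ given by the nonempty subsets of a finite $\eps$-net of $\X$, is the standard Blaschke-type argument; your proof of (ii) correctly passes from a compact set $\overline{U}$ containing the closed $r$-neighborhood of $A$ to the compact ball $\{B\in\bH\st d_\bH(A,B)\le r\}\subset\bH(\overline{U})$, using (i) applied to $\overline{U}$ and the fact that the Hausdorff metric on $\bH(\overline{U})$ is the restriction of $d_\bH$; and in (iii) you correctly identified and resolved the one genuinely delicate point, namely that plain continuity of $f$ does not suffice, so in the locally compact case one must first trap all $B$ near $A$ inside a compact set (shrinking $\delta\le r$ so that $d_\bH(A,B)<\delta$ forces $B\subset\overline{U}$) and then invoke Heine--Cantor uniform continuity of $f$ on that compact set. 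Two details worth stating explicitly if you write this up: in (iv) the conclusion needs $|\mathcal{F}|<\infty$ so that $\max_f s_f<1$, which you use implicitly; and since the paper later applies (iv) to contractions with respect to an equivalent metric $d^*$, it is worth remarking that your argument is insensitive to this, as it uses nothing about the metric beyond the axioms and the attainment of minima on compact sets.
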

\noindent
For $B\subset\mathbb{X}$, let $\mathcal{F}^{k}(B)$ denote the $k$-fold
composition of $\mathcal{F}$, i.e., the union of $f_{i_{1}}\circ f_{i_{2}%
}\circ\cdots\circ f_{i_{k}}(B)$ over all finite words $i_{1}i_{2}\cdots i_{k}$
of length $k.$ Define $\mathcal{F}^{0}(B) := B.$

\begin{definition}
\label{attractdef}A nonempty compact set $A\subset\mathbb{X}$ is said to be an
\textbf{attractor} of the IFS $\mathcal{F}$ if
\begin{itemize}
\item[(i)] $\mathcal{F}(A)=A$, and if

\item[(ii)] there exists an open set $U\subset\mathbb{X}$ such that $A\subset U$ and
$\lim_{k\rightarrow\infty}\mathcal{F}^{k}(B)=A,$ for all $B\in\mathbb{H(}U)$,
where the limit is in the Hausdorff metric.
\end{itemize}
The largest open set $U$ such that $\mathrm{(ii)}$ is true is called the \textbf{basin of
attraction} (for the attractor $A$ of the IFS $\mathcal{F}$).
\end{definition}

Note that if $U_1$ and $U_2$ satisfy condition $\mathrm{(ii)}$ in Definition 2 for the same attractor $A$ then so does 
$U_1 \cup U_2$. We also remark that the invariance condition $\mathrm{(i)}$ is not needed; it follows from $\mathrm{(ii)}$ for $B := A$.

We will use the following observation \cite[Proposition 3 (vii)]{lesniak},
\cite[p.68, Proposition 2.4.7]{edgar}.

\begin{lemma}
\label{intersectlemma}Let $\left\{  B_{k}\right\}  _{k=1}^{\infty}$ be a
sequence of nonempty compact sets such that $B_{k+1}\subset B_{k}$, for all
$k\in\N$. Then $\cap_{k\geq1}B_{k}=\lim_{k\rightarrow\infty}B_{k}$ where
convergence is with respect to the Haudorff metric $d_\bH$.
\end{lemma}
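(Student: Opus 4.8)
The plan is to prove the equality by showing directly that the Hausdorff distance $d_\bH(B_k,B)$ tends to $0$, where $B:=\bigcap_{k\ge1}B_k$. First I would check that $B$ is a legitimate element of $\bH$, i.e. nonempty and compact: since the $B_k$ form a decreasing sequence of nonempty compact sets, Cantor's intersection theorem yields $B\neq\emptyset$, and $B$ is an intersection of closed sets contained in the compact set $B_1$, hence compact. Thus $d_\bH(B_k,B)$ is well defined for every $k$, and the assertion ``$\lim_{k\to\infty}B_k=B$ in $d_\bH$'' is meaningful.

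Next I would simplify the two-sided Hausdorff distance to a one-sided one. Writing $d_X(x,S):=\inf_{s\in S}d_X(x,s)$ for the point-to-set distance, I observe that $B\subset B_k$ makes the term $\sup_{b\in B}\inf_{x\in B_k}d_X(b,x)$ vanish, so that
\[
d_\bH(B_k,B)=\sup_{x\in B_k}d_X(x,B)=:\delta_k.
\]
Because $B_{k+1}\subset B_k$, the sequence $(\delta_k)$ is nonincreasing and bounded below by $0$, hence converges to some $\delta\ge0$; it remains only to prove that $\delta=0$.

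I would establish $\delta=0$ by contradiction, assuming $\delta>0$. For each $k$ the map $x\mapsto d_X(x,B)$ is continuous and $B_k$ is compact, so the supremum defining $\delta_k$ is attained at some $x_k\in B_k$ with $d_X(x_k,B)=\delta_k\ge\delta$. Since $x_k\in B_1$ for all $k$ and $B_1$ is compact, some subsequence $x_{k_j}$ converges to a point $x^\ast\in B_1$. The crux of the argument, and the step I expect to be the main obstacle, is to show $x^\ast\in B$: fixing $m$, every $x_{k_j}$ with $k_j\ge m$ lies in $B_{k_j}\subset B_m$, and since $B_m$ is closed its limit $x^\ast$ lies in $B_m$; as $m$ is arbitrary, $x^\ast\in\bigcap_m B_m=B$. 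But then $d_X(x^\ast,B)=0$, whereas continuity of $d_X(\cdot,B)$ forces $d_X(x^\ast,B)=\lim_j d_X(x_{k_j},B)\ge\delta>0$, a contradiction. Hence $\delta=0$, i.e. $d_\bH(B_k,B)\to0$, which is precisely the claim. The only subtlety is marshalling compactness correctly — using compactness of $B_1$ to extract the convergent subsequence and the nestedness together with closedness of each $B_m$ to trap the limit in the intersection — but no delicate estimates are required beyond the continuity of the distance function.
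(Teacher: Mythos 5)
Your proof is correct: the reduction of $d_\bH(B_k,B)$ to the one-sided quantity $\sup_{x\in B_k}d_X(x,B)$ (using $B\subset B_k$), the monotonicity of $\delta_k$, and the subsequence argument trapping the limit point $x^\ast$ in every $B_m$ are all sound, and no step is missing. Note, however, that the paper does not prove this lemma at all---it simply cites Le\'sniak and Edgar---so there is no internal proof to compare against. The standard argument in the cited source (Edgar, Proposition 2.4.7) is slightly different in flavor from yours: given $\eps>0$, one considers the open $\eps$-neighborhood $U$ of $B$ and observes that the compact sets $B_k\setminus U$ decrease with empty intersection, so by the finite intersection property some $B_k\setminus U$ is already empty, i.e.\ $B_k\subset U$ for all large $k$, which gives $d_\bH(B_k,B)\leq\eps$ directly. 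That route is a direct argument using covering compactness and works in any Hausdorff topological setting where the sets are compact; yours uses sequential compactness and proceeds by contradiction, which is perfectly adequate in the metric-space setting of the paper and arguably more elementary, at the cost of extracting a convergent subsequence and invoking the continuity of the distance function $x\mapsto d_X(x,B)$.
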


The next result shows how one may obtain the attractor $A$ of an IFS. For the proof, we refer the reader to \cite{bm}. Note that we do not assume that the functions in the IFS $\cF$ are contractive.

\begin{theorem}
\label{attractorthm}Let $\mathcal{F}$ be an IFS with attractor $A$ and basin
of attraction $U.$ If the map $\mathcal{F}:\mathbb{H(}U)\mathbb{\rightarrow H(}U)$ is
continuous then%
\[
A=\bigcap\limits_{K\geq1}\overline{\bigcup_{k\geq K}\mathcal{F}^{k}(B)},%
\quad\text{ for all }B\subset U\text{ such that }\overline{B}\in
\mathbb{H(}U)\text{.}%
\]

\end{theorem}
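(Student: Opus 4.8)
The plan is to prove the equality by establishing two set inclusions, relying crucially on Lemma \ref{intersectlemma} and the characterization of the attractor in Definition \ref{attractdef}. First I would fix $B \subset U$ with $\overline{B} \in \bH(U)$ and define, for each $K \geq 1$, the set
\[
E_K := \overline{\bigcup_{k \geq K} \mathcal{F}^k(B)}.
\]
The key structural observation is that $\{E_K\}_{K \geq 1}$ is a nested decreasing sequence of nonempty compact sets: $E_{K+1} \subset E_K$ holds trivially since the union defining $E_{K+1}$ ranges over fewer indices, and each $E_K$ is compact because it is a closed subset of the compact set $A$ once we know the orbit accumulates on $A$. By Lemma \ref{intersectlemma}, it then follows that $\bigcap_{K \geq 1} E_K = \lim_{K \to \infty} E_K$ in the Hausdorff metric, so the right-hand side of the claimed formula is meaningful and equals this limit.

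Next I would exploit the convergence hypothesis from Definition \ref{attractdef}(ii), namely $\mathcal{F}^k(B) \to A$ in $d_\bH$, together with the continuity of $\mathcal{F} : \bH(U) \to \bH(U)$. The forward inclusion $A \subset E_K$ for every $K$ should follow from the fact that $A = \lim_k \mathcal{F}^k(B)$ forces every point of $A$ to be a limit of points lying in the tails $\bigcup_{k \geq K} \mathcal{F}^k(B)$; taking closures and intersecting over $K$ preserves this, giving $A \subset \bigcap_K E_K$. For the reverse inclusion, I would use that Hausdorff convergence $\mathcal{F}^k(B) \to A$ implies that for large $k$ the sets $\mathcal{F}^k(B)$ lie within an arbitrarily small neighborhood of $A$; hence each tail union $\bigcup_{k \geq K} \mathcal{F}^k(B)$ is contained in a shrinking neighborhood of $A$, and since $A$ is closed, the intersection of the closures $E_K$ cannot contain points outside $A$. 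This yields $\bigcap_K E_K \subset A$.

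The main obstacle I anticipate is making the reverse inclusion rigorous without assuming contractivity, since the functions in $\mathcal{F}$ need not be contractions. The delicate point is that the tail unions $\bigcup_{k \geq K} \mathcal{F}^k(B)$ need not themselves converge to $A$ nor be contained in $A$; one only knows each individual $\mathcal{F}^k(B)$ approaches $A$. I would handle this by translating $d_\bH$-convergence into the statement that for every $\eps > 0$ there exists $K_0$ such that $\mathcal{F}^k(B) \subset A_\eps$ (the $\eps$-neighborhood of $A$) for all $k \geq K_0$, whence $E_{K_0} \subset \overline{A_\eps}$. Letting $\eps \to 0$ and using that $A$ is compact hence closed then forces $\bigcap_K E_K \subset \bigcap_{\eps > 0} \overline{A_\eps} = A$. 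Combining both inclusions establishes the desired identity.
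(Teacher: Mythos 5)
Your two-inclusion strategy is sound, but two steps need repair. The first is your compactness claim: $E_K$ is in general \emph{not} a closed subset of $A$. Consider the IFS on $\R$ consisting of the single map $f(x) = x/2$, whose attractor is $A = \{0\}$ with basin $\R$, and take $B = \{1\}$: then $E_K = \{0\} \cup \{2^{-k} : k \geq K\}$, which is not contained in $A$. The tails accumulate \emph{on} $A$; they do not lie \emph{in} $A$. The correct justification is that $E_K$ is a closed subset of $A \cup \bigcup_{k\geq K}\mathcal{F}^k(\overline{B})$, which is compact because the union of a Hausdorff-convergent sequence of compact sets together with its limit set is compact. Note, however, that this flaw infects only your appeal to Lemma \ref{intersectlemma}, which your actual argument never uses: both of your inclusions are proved directly, so your first paragraph could simply be deleted. (The proof the paper cites, in \cite{bm}, is organized the other way around: one shows that the $E_K$ form a decreasing sequence of nonempty compact sets with $d_\bH(E_K,A) \leq \sup_{k\geq K} d_\bH(\mathcal{F}^k(\overline{B}),A) \to 0$ as $K \to \infty$, and Lemma \ref{intersectlemma} then identifies $\bigcap_{K\geq 1}E_K$ with $\lim_{K\to\infty}E_K = A$.)

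The second, more substantive, gap is your use of ``$\mathcal{F}^k(B)\to A$'' as the hypothesis from Definition \ref{attractdef}(ii). That definition yields convergence only for sets in $\bH(U)$, i.e., for nonempty \emph{compact} subsets of $U$, whereas the $B$ in the theorem is an arbitrary subset of $U$ with compact closure; it need be neither closed nor compact. What the definition actually gives you is $\mathcal{F}^k(\overline{B}) \to A$. For your reverse inclusion this is harmless, since $\mathcal{F}^k(B) \subset \mathcal{F}^k(\overline{B}) \subset A_\varepsilon$ for all large $k$. But the forward inclusion requires points of $\mathcal{F}^k(B)$ itself---whose union defines $E_K$---to approximate every point of $A$, and for this you need the identity $\overline{\mathcal{F}^k(B)} = \mathcal{F}^k(\overline{B})$. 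That identity is exactly where continuity enters the proof: each map $f$ in the IFS is continuous and $\overline{B}$ is compact, so $f(B) \subset f(\overline{B}) \subset \overline{f(B)}$ with $f(\overline{B})$ compact, hence $\overline{f(B)} = f(\overline{B})$, and induction over the finitely many $k$-fold compositions gives the claim. As written, your proof never invokes any continuity assumption at all, which is the telltale sign that this bridge between $B$ and $\overline{B}$ is missing. With your first paragraph removed and this closure identity supplied, the two inclusions go through as you describe.
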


The quantity on the right-hand side here is sometimes called the
\textbf{topological upper limit }of the sequence $\left\{ \cF^{k}(B)\st k\in \N\right\}$. (See, for instance, \cite{Engel}.) 

A subclass of IFSs is obtained by imposing additional conditions on the functions that comprise the IFS. The definition below introduces this subclass.

\begin{definition}
An IFS $\cF = (\X; f_1, f_2, \ldots, f_N)$ is called \textbf{contractive} if there exists a metric $d^*$ on $\X$, which is equivalent to $d$, such that each $f\in \cF$ is a contraction with respect to the metric $d^*$, i.e., there is a constant $c \in [0, 1)$
such that 
$$
d^*(f(x_1), f(x_2)) \leq c\,d(x_1, x_2),
$$
for all $x_1, x_2 \in \X$. 
\end{definition}
By item $\mathrm{(iv)}$ in Theorem 1, the mapping 
$\cF : \mathbb{H} \to \mathbb{H}$ is then also contractive on the complete metric space $(\mathbb{H}, d_{\mathbb{H}})$, and thus possesses a unique attractor $A$. This attractor satisfies the \textbf{self-referential equation}
\be\label{self}
A = \cF(A) = \bigcup_{f\in\mathcal{F}}f(A).
\ee
In the case of a contractive IFS, the basin of attraction for $A$ is $\X$ and the attractor can be computed via the following procedure: Let $K_0$ be any set in $\bH(\X)$ and consider the sequence of iterates
\[
K_m := \cF(K_{m-1}) = \cF^m (K_0), \quad m\in \N.
\]  
Then $K_m$ converges in the Hausdorff metric to the attractor $A$ as $m\to\infty$, i.e., $d_\bH(K_m, A) \to 0$ as $m\to\infty$.

For the remainder of this paper, the emphasis will be on contractive IFSs, respectively, contractive local IFSs. We will see that the self-referential equation \eqref{self} plays a fundamental role in the construction of fractal sets and in the determination of their geometric and analytic properties.

\section{From IFS to Local IFS}\label{sec3}
The concept of \textit{local} iterated function system is a generalization of an IFS as defined above and was first introduced in \cite{barnhurd} and reconsidered in \cite{BHM1}. In what follows, $N\in\N$ always denotes a positive integer and $\N_N := \{1, \ldots, N\}$.

\begin{definition}\label{localIFS}
Suppose that $\{\X_i \st i \in \N_N\}$ is a family of nonempty subsets of a metric space $\X$. Further assume that for each $\X_i$ there exists a continuous mapping $f_i: \X_i\to\X$, $i\in \N_N$. Then $\cF_{\loc} := \{\X; (\X_i, f_i)\st i \in \N_N\}$ is called a \textbf{local iterated function system} (local IFS).
\end{definition}

Note that if each $\X_i = \X$, then Definition \ref{localIFS} coincides with the usual definition of a standard (global) IFS on a complete metric space. However, the possibility of choosing the domain for each continuous mapping $f_i$ different from the entire space $X$ adds additional flexibility as will be recognized in the sequel.

\begin{definition}
A local IFS $\cF_{\loc}$ is called \textbf{contractive} if there exists a metric $d^*$ equivalent to $d$ with respect to which all functions $f\in \cF_{\loc}$ are contractive (on their respective domains).
\end{definition}
\noindent
With a local IFS we associate a set-valued operator $\cF_\loc : 2^\X \to 2^\X$ by setting
\be\label{hutchop}
\cF_\loc(S) := \bigcup_{i=1}^N f_i (S\cap \X_i).
\ee

By a slight abuse of notation, we use the same symbol for a local IFS and its associated operator.

\begin{definition}
A subset $A\in 2^\X$ is called a \textbf{local attractor} for the local IFS $\{\X; (\X_i, f_i)\st i \in \N_N\}$ if
\be\label{attr}
A = \cF_\loc (A) = \bigcup_{i=1}^N f_i (A\cap \X_i).
\ee
\end{definition}
In \eqref{attr} we allow for $A\cap \X_i$ to be the empty set. Thus, every local IFS has at least one local attractor, namely $A = \emptyset$. However, it may also have many distinct ones. In the latter case, if $A_1$ and $A_2$ are distinct local attractors, then $A_1\cup A_2$ is also a local attractor. Hence, there exists a largest local attractor for $\cF_\loc$, namely the union of all distinct local attractors. We refer to this largest local attractor as {\em the} local attractor of a local IFS $\cF_\loc$.

\begin{remark}
There exists an alternative definition for \eqref{hutchop}. We could consider the mappings $f_i$ as defined on all of $\X$ in the following sense: For any $S\in 2^\X$, let
\[
f_i (S) := \begin{cases} f_i (S\cap \X_i), & S\cap \X_i\neq \emptyset;\\ \emptyset, & S\cap \X_i = \emptyset,\end{cases}  \qquad i\in \N_N.
\]
\end{remark}

Now suppose that $\X$ is compact and the $\X_i$, $i\in \N_N$, are closed, i.e., compact in $\X$. If in addition the local IFS $\{\X; (\X_i, f_i)\st i \in \N_N\}$ is contractive then the local attractor can be computed as follows. Let $K_0:= \X$ and set
\[
K_n := \cF_\loc (K_{n-1}) = \bigcup_{i\in \N_N} f_i (K_{n-1}\cap \X_i), \quad n\in \N.
\]
Then $\{K_n\st n\in\N_0\}$ is a decreasing nested sequence of compact sets. \textit{If} each $K_n$ is nonempty then by the Cantor Intersection Theorem,
\[
K:= \bigcap_{n\in \N_0} K_n \neq \emptyset.
\]
Using \cite[Proposition 3 (vii)]{lesniak}, we see that
\[
K = \lim_{n\to\infty} K_n,
\]
where the limit is taken with respect to the Hausdorff metric on $\bH$. This implies that
\[
K = \lim_{n\to\infty} K_n = \lim_{n\to\infty} \bigcup_{i\in \N_N} f_i (K_{n-1}\cap \X_i) = \bigcup_{i\in \N_N} f_i (K\cap \X_i) = \cF_\loc (K). 
\]
Thus, $K = A_\loc$. A condition which guarantees that each $K_n$ is nonempty is that $f_i(\X_i) \subset\X_i$, $i\in \N_N$. (See also \cite{barnhurd}.)

In the above setting, one can derive a relation between the local attractor $A_\loc$ of a contractive local IFS $\{\X; (\X_i, f_i)\st i \in \N_N\}$ and the (global) attractor $A$ of the associated (global) IFS $\{\X; f_i\st i \in \N_N\}$. To this end, let the sequence $\{\K_n\st n\in \N_0\}$ be defined as above. The unique attractor $A$ of the IFS $\cF:= \{\X; f_i\st i \in \N_N\}$ is obtained as the fixed point of the set-valued map $\cF: \bH\to \bH$, 
\be\label{setvalued}
\cF (B) = \bigcup_{i\in \N_N} f_i (B),
\ee
where $B\in \bH$. If the IFS $\cF$ is contractive, then the set-valued mapping \eqref{setvalued} is contractive on $\bH$ and its unique fixed point is obtained as the limit of the sequence of sets $\{A_n\st n\in \N_0\}$ with $A_0 := \X$ and 
\[
A_n := \cF(A_{n-1}), \quad n\in \N.
\]
Note that $K_0 = A_0 = \X$ and, assuming that $K_{n-1}\subseteq A_{n-1}$, $n\in\N$, it follows by induction that
\begin{align*}
K_n & = \bigcup_{i\in \N_N} f_i (K_{n-1}\cap X_i) \subseteq \bigcup_{i\in \N_N} f_i (K_{n-1}) \subseteq \bigcup_{i\in \N_N} f_i (A_{n-1}) = A_n.
\end{align*}
Hence, upon taking the limit with respect to the Hausdorff metric as $n\to\infty$, we obtain $A_\loc \subseteq A$. This proves the next result.

\begin{proposition}
Let $\X$ be a compact metric space and let $\X_i$, $i\in \N_N$, be closed, i.e., compact in $\X$. Suppose that the local IFS $\cF_\loc := \{\X; (\X_i, f_i)\st i \in \N_N\}$ and the IFS $\cF:=\{\X; f_i\st i \in \N_N\}$ are both contractive. Then the local attractor $A_\loc$ of $\cF_\loc$ is a subset of the attractor $A$ of $\cF$. 
\end{proposition}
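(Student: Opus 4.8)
The plan is to run the two canonical approximating sequences from the common starting set $\X$ and to show that the inclusion between them survives in the limit. With $K_0 = A_0 = \X$, set $K_n := \cF_\loc(K_{n-1})$ and $A_n := \cF(A_{n-1})$ as in the discussion preceding the statement. Both $\cF_\loc$ and $\cF$ are monotone for set inclusion, and since $\cF(\X)\subseteq\X$, the sequence $\{A_n\}$ is a decreasing nested family of nonempty compact subsets of $\X$ (each $A_n = \bigcup_{i} f_i(A_{n-1})$ is a finite union of continuous images of a compact set, hence compact; cf. Theorem \ref{ctythm}). The global attractor is then recovered as the intersection of this family, and the whole argument reduces to the termwise inclusion $K_n \subseteq A_n$.

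First I would establish $K_n \subseteq A_n$ for all $n\in\N_0$ by induction, exactly as indicated above: the base case is $K_0 = A_0 = \X$, and assuming $K_{n-1}\subseteq A_{n-1}$ one uses $K_{n-1}\cap\X_i\subseteq K_{n-1}$ together with the monotonicity of each $f_i$ on subsets to get $f_i(K_{n-1}\cap\X_i)\subseteq f_i(K_{n-1})\subseteq f_i(A_{n-1})$, and unioning over $i\in\N_N$ gives $K_n\subseteq A_n$. This step is routine.

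The substance lies in the passage to the limit, and here I would bypass Hausdorff-metric estimates in favour of the nested-intersection description. Since $\{A_n\}$ is a decreasing sequence of nonempty compact sets, Lemma \ref{intersectlemma} yields $A = \lim_{n\to\infty} A_n = \bigcap_{n\in\N_0} A_n$. For the local side I would deliberately avoid the identification $A_\loc = \bigcap_n K_n$, since that requires each $K_n$ to be nonempty (ensured, e.g., by the condition $f_i(\X_i)\subseteq\X_i$ noted above) and is not part of the hypotheses. Instead I would argue directly that $A_\loc\subseteq K_n$ for every $n$: from $A_\loc = \cF_\loc(A_\loc)$ and $A_\loc\subseteq\X = K_0$, monotonicity of $\cF_\loc$ gives $A_\loc = \cF_\loc^{\,n}(A_\loc)\subseteq\cF_\loc^{\,n}(\X) = K_n$. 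Combining this with $K_n\subseteq A_n$ shows $A_\loc\subseteq A_n$ for all $n$, whence
\[
A_\loc \;\subseteq\; \bigcap_{n\in\N_0} A_n \;=\; A,
\]
which is the assertion. The one point that needs care is precisely this decoupling from the iterative characterization of $A_\loc$; routing the limit through the monotonicity inclusion $A_\loc\subseteq K_n$ rather than through $A_\loc=\lim_n K_n$ is what keeps the argument clean and free of any extra nonemptiness assumption.
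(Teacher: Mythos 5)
Your proof is correct, and its core --- the inductive termwise inclusion $K_n \subseteq A_n$ starting from $K_0 = A_0 = \X$ --- is exactly the paper's argument. Where you genuinely depart from the paper is in the passage to the limit. The paper takes Hausdorff-metric limits on both sides of $K_n \subseteq A_n$, using $A_n \to A$ and, implicitly, $K_n \to A_\loc$; but that latter identification (established in the discussion preceding the proposition via the Cantor Intersection Theorem and Lemma \ref{intersectlemma}) is available only when every $K_n$ is nonempty, which is not among the hypotheses of the proposition. You sidestep this entirely by never invoking $A_\loc = \lim_{n} K_n$: you obtain $A_\loc \subseteq K_n$ for every $n$ directly from the fixed-point equation $A_\loc = \cF_\loc(A_\loc)$ together with monotonicity of $\cF_\loc$, and you replace the Hausdorff limit on the global side by the nested-intersection identity $A = \bigcap_{n} A_n$, which is legitimate since each $A_n$ is nonempty and compact and the sequence is decreasing. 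The payoff is a proof free of the nonemptiness caveat and of any limit argument: if some $K_n$ were empty, your reasoning still runs (it then yields $A_\loc = \emptyset \subseteq A$), whereas the paper's limit step would formally break down, since $\bH$ consists of \emph{nonempty} compact sets and the limit $\lim_n K_n$ would not exist there. In that sense your route is slightly more general and cleaner than the paper's, at essentially no extra cost: Lemma \ref{intersectlemma} is already at hand, and the monotonicity of $\cF_\loc$ and $\cF$ under set inclusion is immediate from the definitions.
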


Contractive local IFSs are point-fibered provided $\X$ is compact and the subsets $\X_i$, $i\in \N_N$, are closed. To show this, define the code space of a local IFS by $\Omega:= \prod_{n\in\N}\N_N$ and endowed it with the product topology $\mathfrak{T}$. It is known that $\Omega$ is metrizable and that $\mathfrak{T}$ is induced by the metric $d_F: \Omega\times\Omega\to \R$,
\[
d_F(\sigma,\tau) := \sum_{n\in \N} \frac{|\sigma_n - \tau_n|}{(N+1)^n},
\]
where $\sigma = (\sigma_1\ldots\sigma_n\ldots)$ and $\tau = (\tau_1\ldots\tau_n\ldots)$. (As a reference, see for instance \cite{Engel}, Theorem 4.2.2.) The elements of $\Omega$ are called codes.

Define a set-valued mapping $\gamma :\Omega \to \K(\X)$, where $\K(\X)$ denotes the hyperspace of all compact subsets of $X$, by
\[
\gamma (\sigma) := \bigcap_{n=1}^\infty f_{\sigma_1}\circ \cdots \circ f_{\sigma_n} (\X),
\]
where $\sigma = (\sigma_1\ldots\sigma_n\ldots)$. Then $\gamma (\sigma)$ is point-fibered, i.e., a singleton. Moreover, in this case, the local attractor $A$ equals $\gamma(\Omega)$. (For details about point-fibered IFSs and attractors, we refer the interested reader to \cite{K}, Chapters 3--5.) 

\begin{example}
Let $\X := [0,1]\times [0,1]$ and suppose that $0 < x_2 < x_1 < 1$ and $0 < y_2 < y_1 < 1$. Define
\[
\X_1 := [0,x_1]\times [0,y_1]\qquad\text{and}\qquad \X_2 := [x_2,1]\times [y_2,1].
\]
Furthermore, let $f_i:\X_i \to \X$, $i=1,2$, be given by
\[
f_1(x,y) := (s_1 x, s_1 y)\quad\text{and}\quad f_2(x,y) := (s_2 x + (1-s_2) x_2, s_2 y + (1-s_2)y_2),
\]
respectively, where $s_1,s_2\in [0,1)$.

The (global) IFS $\{\X; f_1, f_2\}$ has the line segment $A = \{(x, \frac{y_2}{x_2}\, x)\st 0\leq x \leq x_2\}$ as its unique attractor. The local attractor of the local IFS $\{\X; (\X_1, f_1), (\X_2, f_2)\}$ is given by $A_\loc = \{(0,0)\}\cup\{(x_2,y_2)\}$, the union of the fixed point for $f_1$ and $f_2$, respectively.
\end{example}
\section{Local Fractal Functions}\label{sec4}
In this section, we introduce bounded local fractal functions as the fixed points of operators acting on the complete metric space of bounded functions. We will see that the graph of a local fractal functions is the local attractor of an associated local IFS and that the set of discontinuities of a bounded local fractal function is at most countably infinite. We follow the exhibition presented in \cite{BHM1}.

To this end, let $X$ be a nonempty connected set and $\{X_i \st i \in\N_N\}$ a family of nonempty connected subsets of $X$. Suppose $\{u_i : X_i\to X \st i \in \N_N\}$ is a family of bijective mappings with the property that
\begin{enumerate}
\item[(P)] $\{u_i(X_i)\st i \in\N_N\}$ forms a (set-theoretic) partition of $X$: $X = \bigcup_{i=1}^N u_i(X_i)$ and $u_i(X_i)\cap u_j(X_j) = \emptyset$, for all $i\neq j\in \N_N$.
\end{enumerate}
\noindent
Now suppose that $(\Y,d_\Y)$ is a complete metric space with metric $d_\Y$. A mapping $f:X\to \Y$ is called \textbf{bounded} (with respect to the metric $d_\Y$) if there exists an $M> 0$ so that for all $x_1, x_2\in X$, $d_\Y(f(x_1),f(x_2)) < M$.

Denote by $B(X, \Y)$ the set
\[
B(X, \Y) := \{f : X\to \Y \st \text{$f$ is bounded}\}.
\]
Endowed with the metric 
\[
d(f,g): = \displaystyle{\sup_{x\in X}} \,d_\Y(f(x), g(x)),
\] 
$(B(X, \Y), d)$ becomes a complete metric space. In a similar fashion, we define $B(X_i, \Y)$, $i \in\N_N$.

Under the usual addition and scalar multiplication of functions, the spaces $B(X_i,\Y)$ and $B(X,\Y)$ become metric linear spaces \cite{Rol}. Recall that a \textbf{metric linear space} is a vector space endowed with a metric under which the operations of vector addition and scalar multiplication become continuous.

For $i \in \N_N$, let $v_i: X_i\times \Y \to \Y$ be a mapping that is uniformly contractive in the second variable, i.e., there exists an $\ell\in [0,1)$ so that for all $y_1, y_2\in \Y$
\be\label{scon}
d_\Y (v_i(x, y_1), v_i(x, y_2)) \leq \ell\, d_\Y (y_1, y_2), \quad\forall x\in X.
\ee
Define a \textbf{Read-Bajactarevi\'c (RB) operator} $\Phi: B(X,\Y)\to \Y^{X}$ by
\be\label{RB}
\Phi f (x) := \sum_{i=1}^N v_i (u_i^{-1} (x), f_i\circ u_i^{-1} (x))\,\chi_{u_i(X_i)}(x), 
\ee
where $f_i := f\vert_{X_i}$ and
$$
\chi_M (x) := \begin{cases} 1, & x\in M\\ 0, & x\notin M\end{cases},
$$
denotes the characteristic function of a set $M$. Note that $\Phi$ is well-defined, and since $f$ is bounded and each $v_i$ contractive in its second variable, $\Phi f\in B(X,\Y)$.

Moreover, by \eqref{scon}, we obtain for all $f,g\in B(X, \Y)$ the following inequality:
\begin{align}\label{estim}
d(\Phi f, \Phi g) & = \sup_{x\in X} d_\Y (\Phi f (x), \Phi g (x))\nonumber\\
& = \sup_{x\in X} d_\Y (v(u_i^{-1} (x), f_i(u_i^{-1} (x))), v(u_i^{-1} (x), g_i(u_i^{-1} (x))))\nonumber\\
& \leq \ell\sup_{x\in X} d_\Y (f_i\circ u_i^{-1} (x), g_i \circ u_i^{-1} (x)) \leq \ell\, d_\Y(f,g).
\end{align}
To simplify notation, we had set $v(x,y):= \sum_{i=1}^N v_i (x, y)\,\chi_{X_i}(x)$ in the above equation. In other words, $\Phi$ is a contraction on the complete metric space $B(X,\Y)$ and, by the Banach Fixed Point Theorem, has therefore a unique fixed point $\ff$ in $B(X,\Y)$. This unique fixed point will be called a \textbf{local fractal function}  $\ff = \ff_\Phi$ (generated by $\Phi$).

Next, we would like to consider a special choice of mappings $v_i$. To this end, we require the concept of an $F$-space. We recall that a metric $d:\Y\times\Y\to \R$ is called \textbf{complete} if every Cauchy sequence in $\Y$ converges with respect to $d$ to a point of $\Y$, and \textbf{translation-invariant} if $d(x+a,y+a) = d(x,y)$, for all $x,y,a\in \Y$.

\begin{definition}
A topological vector space $\Y$ is called an \textbf{$\boldsymbol{F}$-space} \cite{Rol} if its topology is induced by a complete translation-invariant metric $d$.
\end{definition}

Now suppose that $\Y$ is an $F$-space. Denote its metric by $d_\Y$. We define mappings $v_i:X_i\times\Y\to \Y$ by
\be\label{specialv}
v_i (x,y) := \lambda_i (x) + S_i (x) \,y,\quad i \in \N_N,
\ee
where $\lambda_i \in B(X_i,\Y)$ and $S_i : X_i\to \R$ is a function.

If in addition we require that the metric $d_\Y$ is homogeneous, that is,
\[
d_\Y(\alpha y_1, \alpha y_2) = |\alpha| d_\Y(y_1,y_2), \quad \forall \alpha\in\R\;\forall y_1.y_2\in \Y,
\]
then $v_i$ given by \eqref{specialv} satisfies condition \eqref{scon} provided that the functions $S_i$ are bounded on $X_i$ with bounds in $[0,1)$. For then
\begin{align*}
d_\Y (\lambda_i (x) + S_i (x) \,y_1,\lambda_i (x) + S_i (x) \,y_2) &= d_\Y(S_i (x) \,y_1,S_i (x) \,y_2) \\
& = |S_i(x)| d_\Y (y_1, y_2)\\
& \leq \|S_i\|_{\infty,X_i}\, d_\Y (y_1, y_2)\\
& \leq s\,d_\Y (y_1, y_2).
\end{align*}
Here, we denoted the supremum norm with respect to $X_i$ by $\|\bullet\|_{\infty, X_i}$, and set $s := \max\{\|S_i\|_{\infty,X_i}\st$ $i\in \N_N\}$.

Thus, for a {\em fixed} set of functions $\{\lambda_1, \ldots, \lambda_N\}$ and $\{S_1, \ldots, S_N\}$, the associated RB operator \eqref{RB} has now the form
\[
\Phi f = \sum_{i=1}^N \lambda_i\circ u_i^{-1} \,\chi_{u_i(X_i)} + \sum_{i=1}^N (S_i\circ u_i^{-1})\cdot (f_i\circ u_i^{-1})\,\chi_{u_i(X_i)},
\]
or, equivalently,
\[
\Phi f_i\circ u_i = \lambda_i + S_i\cdot f_i, \quad \text{on $X_i$, $\forall\;i\in\N_N$,}
\]
with $f_i = f\vert_{X_i}$.

\begin{theorem}
Let $\Y$ be an $F$-space with homogeneous metric $d_\Y$. Let $X$ be a nonempty connected set and $\{X_i \st i \in\N_N\}$ a collection of nonempty connected subsets of $X$. Suppose that $\{u_i : X_i\to X \st i \in \N_N\}$ is a family of bijective mappings satisfying property $\mathrm{(P)}$.

Let $\blambda := (\lambda_1, \ldots, \lambda_N)\in \underset{i=1}{\overset{N}{\times}} B(X_i,\Y)$ and $\bS := (S_1, \ldots, S_N)\in \underset{i=1}{\overset{N}{\times}} B (X_i,\R)$. Define a mapping $\Phi: \left(\underset{i=1}{\overset{N}{\times}} B(X_i,\Y)\right)\times \left(\underset{i=1}{\overset{N}{\times}} B (X_i,\R)\right) \times B(X,\Y)\to B(X,\Y)$ by
\be\label{eq3.4}
\Phi(\blambda)(\bS) f = \sum_{i=1}^N \lambda_i\circ u_i^{-1} \,\chi_{u_i(X_i)} + \sum_{i=1}^N (S_i\circ u_i^{-1})\cdot (f_i\circ u_i^{-1})\,\chi_{u_i(X_i)}.
\ee
If $\max\{\|S_i\|_{\infty,X_i}\st i\in \N_N\} < 1$ then the operator $\Phi(\blambda)(\bS)$ is contractive on the complete metric space $B(X, \Y)$ and its unique fixed point $\ff$ satisfies the self-referential equation
\be\label{3.4}
\ff = \sum_{i=1}^N \lambda_i\circ u_i^{-1} \,\chi_{u_i(X_i)} + \sum_{i=1}^N (S_i\circ u_i^{-1})\cdot (\ff_i\circ u_i^{-1})\,\chi_{u_i(X_i)},
\ee
or, equivalently
\be
\ff\circ u_i = \lambda_i + S_i\cdot \ff_i, \quad \text{on $X_i$, $\forall\;i\in\N_N$,}
\ee
where $\ff_i = \ff\vert_{X_i}$.
\end{theorem}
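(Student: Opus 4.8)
\section*{Proof proposal}

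The plan is to recognize that this statement is the specialization of the general Read--Bajraktarevi\'c framework developed just above to the affine choice $v_i(x,y) = \lambda_i(x) + S_i(x)\,y$ from \eqref{specialv}, so that almost all of the analytic work has already been carried out; what remains is to assemble the pieces and to verify the equivalence of the two forms of the self-referential equation. First I would record that $\Phi(\blambda)(\bS)$ is precisely the operator \eqref{RB} associated with the mappings \eqref{specialv}, and that it maps $B(X,\Y)$ into itself. Boundedness of the image follows as noted after \eqref{RB}: on each cell $u_i(X_i)$ the function $\Phi(\blambda)(\bS)f$ equals $\lambda_i\circ u_i^{-1} + (S_i\circ u_i^{-1})\cdot(f_i\circ u_i^{-1})$, and since $\lambda_i\in B(X_i,\Y)$, $S_i\in B(X_i,\R)$, and $f\in B(X,\Y)$, the homogeneity and translation-invariance of $d_\Y$ guarantee that this has bounded oscillation on the cell. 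Because the cells $u_i(X_i)$ partition $X$ by property $\mathrm{(P)}$ and there are only finitely many of them, these local bounds combine into a single global bound.

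Next I would obtain the contraction estimate. The computation preceding the theorem shows, using the homogeneity of $d_\Y$, that each $v_i$ of the form \eqref{specialv} satisfies the uniform contractivity condition \eqref{scon} with constant $\ell = s := \max\{\|S_i\|_{\infty,X_i}\st i\in\N_N\}$. Feeding this into the general estimate \eqref{estim}, and using that for each $x\in X$ exactly one index $i$ contributes (again by $\mathrm{(P)}$), yields $d(\Phi(\blambda)(\bS)f,\Phi(\blambda)(\bS)g)\le s\,d(f,g)$. Since by hypothesis $s<1$, the operator is a contraction on the complete metric space $(B(X,\Y),d)$, and the Banach Fixed Point Theorem furnishes a unique fixed point $\ff\in B(X,\Y)$. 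The fixed-point identity $\ff = \Phi(\blambda)(\bS)\ff$ is then verbatim \eqref{3.4}.

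Finally I would derive the pointwise form. Fix $i\in\N_N$ and $t\in X_i$, and evaluate \eqref{3.4} at $x = u_i(t)\in u_i(X_i)$. By property $\mathrm{(P)}$ the point $x$ lies in $u_j(X_j)$ only for $j=i$, so $\chi_{u_j(X_j)}(x)=\delta_{ij}$ and only the $i$-th summand survives; since $u_i^{-1}(u_i(t)) = t$, this gives $\ff(u_i(t)) = \lambda_i(t) + S_i(t)\,\ff_i(t)$, i.e. $\ff\circ u_i = \lambda_i + S_i\cdot\ff_i$ on $X_i$, as claimed. I expect the only genuinely delicate points to be the boundedness bookkeeping in the first step, where the $F$-space structure and the homogeneity of the metric are essential in controlling the term $S_i\cdot f_i$, and the careful use of the partition property $\mathrm{(P)}$ to ensure that, at each point and on each cell, precisely one branch of the sum is active.
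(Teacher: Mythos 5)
Your proposal is correct and follows essentially the same route as the paper, whose proof consists of the single remark that the statements ``follow directly from the considerations preceding the theorem''---namely the contraction estimate \eqref{estim} for the general RB operator and the homogeneity computation showing that $v_i(x,y)=\lambda_i(x)+S_i(x)\,y$ satisfies \eqref{scon} with constant $s=\max\{\|S_i\|_{\infty,X_i}\st i\in\N_N\}$. You have simply made explicit the assembly of those pieces (well-definedness, contraction, Banach fixed point, and the pointwise form via property $\mathrm{(P)}$), which is exactly what the paper intends.
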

\begin{proof}
The statements follow directly from the considerations preceding the theorem.
\end{proof}

The fixed point $\ff$ in \eqref{3.4} is called a \textbf{bounded local fractal function} or, for short, \textbf{local fractal function}.

\begin{remark}
Note that the local fractal function $\ff$ generated by the operator $\Phi$ defined by \eqref{eq3.4} does not only depend on the family of subsets $\{X_i \st i \in \N_N\}$ but also on the two $N$-tuples of bounded functions $\blambda\in \underset{i=1}{\overset{N}{\times}} B(X_i,\Y)$ and $\bS\in \underset{i=1}{\overset{N}{\times}} B (X_i,\R)$. The fixed point $\ff$ should therefore be written more precisely as $\ff (\blambda)(\bS)$. However, for the sake of notational simplicity, we usually suppress this dependence for both $\ff$ and $\Phi$.
\end{remark}

\begin{example}
Suppose $X:= [0,1]$ and $\Y:=\R$. In Figure \ref{fig:randfracfun}, we display the graph of a randomly generated local fractal function where the $\lambda_i$'s and the $S_i$'s were chosen to have random constant values. 
\begin{figure}[h!]
  \centerline{\includegraphics[width=0.5\textwidth]{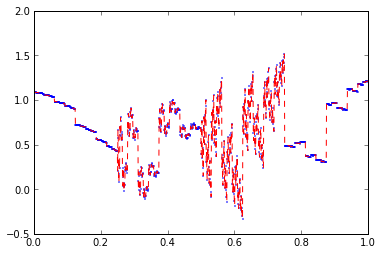}}
\caption{A randomly generated local fractal function\label{fig:randfracfun}}
\end{figure}
\end{example}

The following result found in \cite{GHM} and, in more general form, in \cite{M97} is the extension to the setting of local fractal functions.
\begin{theorem}\label{thm3.3}
The mapping $\blambda \mapsto \ff(\blambda)$ defines a linear isomorphism from $\underset{i=1}{\overset{N}{\times}} B(X_i,\Y)$ to $B(X,\Y)$.
\end{theorem}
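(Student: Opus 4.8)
The plan is to exploit the affine structure of the RB operator in the pair $(\blambda, f)$, keeping $\bS$ fixed as the notation $\ff(\blambda)$ indicates. The key observation is that $\Phi(\blambda)(\bS)$ splits as
\[
\Phi(\blambda)(\bS)\,f = L\blambda + T f,
\]
where $L: \underset{i=1}{\overset{N}{\times}} B(X_i,\Y) \to B(X,\Y)$ is given by $L\blambda := \sum_{i=1}^N \lambda_i\circ u_i^{-1}\,\chi_{u_i(X_i)}$ and $T: B(X,\Y)\to B(X,\Y)$ by $Tf := \sum_{i=1}^N (S_i\circ u_i^{-1})\cdot(f_i\circ u_i^{-1})\,\chi_{u_i(X_i)}$. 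Both $L$ and $T$ are manifestly linear, and the estimate \eqref{estim} shows $T$ is a contraction with factor $s := \max_i \|S_i\|_{\infty,X_i} < 1$. The defining equation \eqref{3.4} then reads $(\id - T)\,\ff = L\blambda$, so that $\ff(\blambda) = (\id - T)^{-1} L\blambda$. It therefore suffices to show that both factors on the right are linear isomorphisms and then compose them.

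For the factor $(\id - T)^{-1}$, I would argue as follows. Since $d_\Y$ is translation-invariant and homogeneous, the same holds for the sup-metric $d$ on $B(X,\Y)$; setting $\|g\| := d(g,0)$ turns $B(X,\Y)$ into a Banach space in which $T$ is a bounded linear operator with $\|T\| \le s < 1$. The Neumann series $\sum_{k\ge 0} T^k$ then converges and furnishes a bounded linear inverse of $\id - T$. Equivalently, and staying entirely within the metric-linear-space language, the Banach Fixed Point Theorem already guarantees that for each $g\in B(X,\Y)$ the equation $\ff - T\ff = g$ has a unique solution, and homogeneity of $d$ yields the estimate $d((\id-T)^{-1}g,0)\le \tfrac{1}{1-s}\,d(g,0)$, so the inverse is continuous.

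For the factor $L$, this is where property $\mathrm{(P)}$ does the work. Because $\{u_i(X_i)\}_{i\in\N_N}$ partitions $X$, every $h\in B(X,\Y)$ is uniquely determined by its restrictions $h|_{u_i(X_i)}$, and $L\blambda$ is exactly the function whose restriction to $u_i(X_i)$ equals $\lambda_i\circ u_i^{-1}$. The inverse is given explicitly by $h \mapsto \big((h\circ u_1)|_{X_1}, \ldots, (h\circ u_N)|_{X_N}\big)$, which is well-defined since each $u_i: X_i \to u_i(X_i)$ is a bijection and boundedness is preserved in both directions. Disjointness of the partition pieces gives $d(L\blambda, L\blambda') = \max_i d(\lambda_i, \lambda'_i)$, so $L$ and $L^{-1}$ are both continuous (indeed isometric for the max-metric on the product); hence $L$ is a linear isomorphism.

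Combining the two factors, $\ff(\blambda) = (\id-T)^{-1}L\blambda$ is a composition of linear isomorphisms and therefore a linear isomorphism; injectivity and surjectivity can also be read off directly, e.g.\ $\ff(\blambda)=0 \Rightarrow L\blambda = (\id-T)\,0 = 0 \Rightarrow \blambda = 0$, and any target $g$ is attained at $\blambda = L^{-1}((\id-T)g)$. The combinatorial part (the isomorphism $L$ from property $\mathrm{(P)}$) is essentially bookkeeping; the step that genuinely uses the hypotheses on $\Y$ is the invertibility of $\id - T$ \emph{with continuous inverse} in a space that is only a priori a metric linear space, and I expect this to be the main point requiring care. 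The resolution is that the homogeneity of $d_\Y$ collapses the $F$-space setting to the standard Banach-space argument, so no difficulty actually remains.
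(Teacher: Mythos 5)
Your proof is correct, and it takes a genuinely different route from the paper's. The paper argues directly on the fixed-point equation \eqref{3.4}: injectivity because $\ff(\blambda)=\ff(\bmu)$ forces the two self-referential equations to have identical inhomogeneous parts, whence $\blambda=\bmu$ by property $\mathrm{(P)}$; linearity because $\alpha\ff(\blambda)+\beta\ff(\bmu)$ satisfies the fixed-point equation defining $\ff(\alpha\blambda+\beta\bmu)$ and fixed points are unique; and surjectivity by the explicit choice $\lambda_i:=\ff\circ u_i-S_i\cdot\ff_i$, which is precisely your $\blambda=L^{-1}((\id-T)\ff)$ written componentwise. Your factorization $\ff(\blambda)=(\id-T)^{-1}L\blambda$ repackages the same ingredients (uniqueness of the fixed point, linearity of $\Phi$ in the pair $(\blambda,f)$, property $\mathrm{(P)}$), but it buys more than the paper's proof establishes: it shows the map is a \emph{topological} isomorphism, since $L$ is an isometry for the max-metric on $\underset{i=1}{\overset{N}{\times}} B(X_i,\Y)$ and $(\id-T)^{-1}$ is Lipschitz with constant $(1-s)^{-1}$, whereas the paper's argument only yields an algebraic linear bijection. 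Your closing observation is also sound: homogeneity plus translation invariance of $d_\Y$ make $\|y\|:=d_\Y(y,0)$ a genuine norm, so $\Y$ and $B(X,\Y)$ are in fact Banach spaces and the Neumann series is legitimate; your alternative purely metric argument (Banach fixed point for $f\mapsto g+Tf$, followed by the estimate $(1-s)\,d(f,f')\le d(g,g')$) covers the same ground without even invoking normability. The one step worth spelling out is that $L$ really lands in $B(X,\Y)$: with the paper's definition of boundedness (finite diameter of the image), a function whose restrictions to the pieces $u_i(X_i)$ are bounded is itself bounded because the partition is \emph{finite}, so "boundedness is preserved in both directions" deserves that half-line of justification.
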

\begin{proof}
Let $\alpha, \beta \in\R$ and let $\blambda, \bmu\in \underset{i=1}{\overset{N}{\times}} B(X_i,\Y)$. Injectivity follows immediately from the fixed point equation \eqref{3.4} and the uniqueness of the fixed point: $\blambda = \bmu$ $\Longleftrightarrow$ $\ff(\blambda) = \ff(\bmu)$, .

Linearity follows from \eqref{3.4}, the uniqueness of the fixed point and injectivity: 
\begin{align*}
\ff(\alpha\blambda + \beta \bmu) & = \sum_{i=1}^N (\alpha\lambda_i + \beta \mu_i) \circ u_i^{-1} \,\chi_{u_i(X_i)}\\
& \qquad  + \sum_{i=1}^N (S_i\circ u_i^{-1})\cdot (f_i^*(\alpha\blambda + \beta \bmu)\circ u_i^{-1})\,\chi_{u_i(X_i)}
\end{align*}
and
\begin{align*}
\alpha \ff(\blambda) + \beta \ff(\bmu) & = \sum_{i=1}^N (\alpha\lambda_i + \beta \mu_i) \circ u_i^{-1} \,\chi_{u_i(X_i)}\\
& \qquad  + \sum_{i=1}^N (S_i\circ u_i^{-1})\cdot (\alpha f_i^*(\blambda) + \beta f_i^*(\bmu))\circ u_i^{-1})\,\chi_{u_i(X_i)}.
\end{align*}
Hence, $\ff(\alpha\blambda + \beta \bmu) = \alpha \ff(\blambda) + \beta \ff(\bmu)$.

For surjectivity, we define $\lambda_i := \ff\circ u_i - S_i \cdot \ff$, $i\in \N_N$. Since $\ff\in B(X,\Y)$, we have $\blambda\in \underset{i=1}{\overset{N}{\times}} B(X_i,\Y)$. Thus, $\ff(\blambda) = \ff$.
\end{proof}

The next results gives information about the set of discontinuities of a bounded local fractal function $\ff$. The proof can be found in \cite{BHM1}.

\begin{theorem}\label{discont}
Let $\Phi$ be given as in \eqref{eq3.4}. Assume that for all $i\in \N_N$ the $u_i$ are contractive and the $\lambda_i$ are continuous on $\overline{X_i}$. Further assume that 
\[
\max\left\{\|S_i\|_{\infty,X_i}\st i\in\N_N\right\} < 1,
\]
 and that the fixed point $\ff$ is bounded everywhere. Then the set of discontinuities of $\ff$ is at most countably infinite.
\end{theorem}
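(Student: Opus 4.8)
The plan is to realise $\ff$ as the uniform limit of the Picard iterates $f_n := \Phi^n f_0$ started from a continuous seed, and to track how the discontinuity set can grow under one application of $\Phi$. Since $\Phi = \Phi(\blambda)(\bS)$ is a contraction on $(B(X,\Y),d)$ with $d$ the supremum metric and contraction constant $s := \max\{\|S_i\|_{\infty,X_i}\} < 1$, the iterates converge \emph{uniformly}: $d(f_n,\ff)\le s^n\, d(f_0,\ff)\to 0$. Taking $f_0$ to be a constant (continuous and bounded, with $\mathrm{disc}(f_0)=\emptyset$), the whole problem reduces to showing that the discontinuity sets of all the $f_n$ sit inside one fixed countable set.

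The geometric engine is the partition $\{u_i(X_i)\}$ of $X$. Let $\Gamma := \bigcup_{i\in\N_N}\partial(u_i(X_i))$ be the set of interface points where the piecewise branches of $\Phi$ meet; in the relevant setting ($X$ a subinterval of $\R$, the $u_i$ affine) this is a finite set. From $\Phi g = \sum_i \lambda_i\circ u_i^{-1}\,\chi_{u_i(X_i)} + \sum_i (S_i\circ u_i^{-1})\,(g\circ u_i^{-1})\,\chi_{u_i(X_i)}$ one sees that on the interior of each cell $u_i(X_i)$ the value $\Phi g(x)$ depends on $x$ continuously through $\lambda_i$, $S_i$ and $u_i^{-1}$, and on $g$ only through $g(u_i^{-1}(x))$. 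Hence if $g$ is continuous at $u_i^{-1}(x)$ then $\Phi g$ is continuous at $x$, which gives the inclusion $\mathrm{disc}(\Phi g)\subseteq \Gamma \cup \bigcup_i u_i(\mathrm{disc}(g))$. Iterating from $f_0$ yields $\mathrm{disc}(f_n)\subseteq F_n := \bigcup_{m=0}^{n-1}\bigcup_{|w|=m} u_w(\Gamma)$, where $u_w := u_{w_1}\circ\cdots\circ u_{w_m}$ ranges over all words $w$ of length $m$ in the alphabet $\N_N$.

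Set $F_\infty := \bigcup_{n} F_n = \bigcup_{m\ge 0}\bigcup_{|w|=m} u_w(\Gamma)$. As $N$ is finite there are only countably many finite words $w$, and each $u_w(\Gamma)$ is finite, so $F_\infty$ is a countable union of finite sets, hence at most countable. It remains to see that every $x_0\notin F_\infty$ is a point of continuity of $\ff$. For such $x_0$ we have $x_0\notin F_n\supseteq \mathrm{disc}(f_n)$, so each $f_n$ is continuous at $x_0$; the $\varepsilon/3$ estimate $d_\Y(\ff(x),\ff(x_0))\le d_\Y(\ff(x),f_n(x)) + d_\Y(f_n(x),f_n(x_0)) + d_\Y(f_n(x_0),\ff(x_0))$, with the outer terms controlled by uniform convergence and the middle one by continuity of $f_n$ at $x_0$, shows that $\ff$ is continuous at $x_0$. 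Therefore $\mathrm{disc}(\ff)\subseteq F_\infty$, which is at most countable. A pleasant feature of this route is that the delicate ``limit'' points carrying an infinite non-terminating address (the point-fibered points $\gamma(\sigma)$) need no separate treatment: they lie outside $F_\infty$ and are absorbed by the uniform-limit argument.

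The main obstacle is entirely in the hypotheses feeding the inclusion $\mathrm{disc}(\Phi g)\subseteq \Gamma\cup\bigcup_i u_i(\mathrm{disc}(g))$. That step needs $u_i^{-1}$ continuous on $u_i(X_i)$ (true when each $u_i$ is a homeomorphism onto its image, e.g. a contraction of a \emph{compact} cell or an affine map on an interval) and, crucially, it needs the $S_i$ to have at most countably many discontinuities — not merely $\|S_i\|_{\infty,X_i}<1$. Continuity of $S_i$ on $\overline{X_i}$, matching the stated hypothesis on $\lambda_i$, is the clean assumption; were $S_i$ allowed an uncountable discontinuity set the conclusion would fail. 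So the real point is to pin down the correct regularity of $S_i$ and to observe that $\Gamma$ is countable; once these are in place, the remainder is the routine bookkeeping sketched above.
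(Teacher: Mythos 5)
There is no in-paper proof to compare against: the paper explicitly defers the proof of Theorem \ref{discont} to \cite{BHM1}. Your scheme --- Picard iterates $f_n=\Phi^n f_0$ from a continuous seed, the one-step inclusion $\mathrm{disc}(\Phi g)\subseteq\Gamma\cup\bigcup_{i}u_i(\mathrm{disc}(g)\cap X_i)$, countability of $F_\infty=\bigcup_{m\ge 0}\bigcup_{|w|=m}u_w(\Gamma)$, and the $\varepsilon/3$ argument showing that a uniform limit of functions all continuous at $x_0$ is continuous at $x_0$ --- is the standard argument for statements of this kind, and your bookkeeping is correct in the setting where it is meant to run: $X$ a real interval (so that each $u_i(X_i)$ is a subinterval and $\Gamma$ is finite) and each $S_i$ continuous. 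One of your hedges costs nothing in that setting: $u_i^{-1}$ is automatically continuous on $u_i(X_i)$, because a continuous injection of an interval is strictly monotone.

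Your closing paragraph is the important part, and you are right on both counts; the caveats are not pedantry, they identify hypotheses without which the theorem is false as printed. For the $S_i$: take $X=[0,1)$, $X_1=X_2=X$, $u_1(x)=x/2$, $u_2(x)=(x+1)/2$, $\lambda_1=\lambda_2\equiv 1$, and $S_1=S_2=\tfrac{1}{2}\chi_E$ with $E$ the set of irrationals in $[0,1)$. Every stated hypothesis holds: the $u_i$ are contractive bijections satisfying (P), the $\lambda_i$ are continuous, $\|S_i\|_{\infty,X_i}=\tfrac{1}{2}<1$, and $1\le\ff\le 2$, so $\ff$ is bounded everywhere. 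Yet the fixed-point equation $\ff\circ u_i=\lambda_i+S_i\cdot\ff_i$ forces $\ff=1$ at every rational point and $\ff\ge\tfrac{3}{2}$ at every irrational point, so $\ff$ is discontinuous at \emph{every} point of $X$, an uncountable set. For the countability of $\Gamma$: in the general setting of Section 4 the conclusion fails even with constant $S_i$; take $X=[0,1)^2$ partitioned into its four half-open quarter squares, each the image of $X$ under a similarity of ratio $\tfrac{1}{2}$, with $S_i\equiv 0$ and $\lambda_i$ equal to the constant $0$ on the two left cells and $1$ on the two right cells, so that $\ff=\chi_{[1/2,1)\times[0,1)}$ is discontinuous along an entire segment. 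So the theorem must be read with the additional hypothesis that the $S_i$ are continuous on $\overline{X_i}$ (or at least have countable discontinuity sets), and with $X$ one-dimensional so that $\Gamma$ is finite; under that reading your proof is complete.
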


Next, we exhibit the relation between the graph $G$ of the fixed point $\ff$ of the operator $\Phi$ given by \eqref{eq3.4} and the local attractor of an associated contractive local IFS. To this end, we need to require that $\X$ is a closed subset of a complete metric space, hence complete itself. Consider the complete metric space $\X\times\Y$ and define mappings $w_i:\X_i\times\Y\to \X\times\Y$ by
\[
w_i (x, y) := (u_i (x), v_i (x,y)), \quad i\in \N_N.
\]
Assume that the mappings $v_i: \X_i\times \Y\to \Y$ in addition to being uniformly contractive in the second variable are also uniformly Lipschitz continuous in the first variable, i.e., that there exists a constant $L > 0$ so that for all $y\in \Y$,
\[
d_\Y(v_i(x_1, y),v_i(x_2, y)) \leq L \, d_\X (x_1,x_2), \quad\forall x_1, x_2\in \X_i,\quad\forall i\in \N_N.
\]
Denote by $a:= \max\{a_i\st i\in \N_N\}$ the largest of the Lipschitz constants of the mappings $u_i:\X_i\to \X$ and let $\theta := \frac{1-a}{2L}$. It is straight-forward to show that the mapping $d_\theta : (\X\times\Y)\times(\X\times\Y) \to \R$ given by
\[
d_\theta := d_\X + \theta\,d_\Y
\]
is a metric for $\X\times\Y$ compatible with the product topology on $\X\times\Y$.

\begin{theorem}
The family $\cW_\loc := \{\X\times\Y; (\X_i\times\Y, w_i)\st i\in \N_N\}$ is a contractive local IFS in the metric $d_\theta$ and the graph $G(\ff)$ of the local fractal function $\ff$ associated with the operator $\Phi$ given by \eqref{eq3.4} is an attractor of $\cW_\loc$. Moreover, 
\be\label{GW}
G(\Phi \ff) = \cW_\loc (G(\ff)),
\ee
where $\cW_\loc$ denotes the set-valued operator \eqref{hutchop} associated with the local IFS $\cW_\loc$.
\end{theorem}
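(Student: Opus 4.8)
The plan is to establish the three assertions in sequence: first the contractivity of the local IFS $\cW_\loc$ in the metric $d_\theta$, then the key identity \eqref{GW}, and finally the attractor property as a corollary of the fixed-point equation.

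For the contractivity I would estimate $d_\theta(w_i(x_1,y_1), w_i(x_2,y_2))$ directly. Unravelling the definitions $d_\theta = d_\X + \theta\,d_\Y$ and $w_i(x,y) = (u_i(x), v_i(x,y))$, the first summand is controlled by the Lipschitz bound $d_\X(u_i(x_1), u_i(x_2)) \leq a\, d_\X(x_1,x_2)$, while for the second summand I would insert the intermediate point $v_i(x_1, y_2)$ and combine the two hypotheses on $v_i$: uniform contractivity in the second variable (constant $\ell$, see \eqref{scon}) and uniform Lipschitz continuity in the first variable (constant $L$). This yields $d_\Y(v_i(x_1,y_1), v_i(x_2,y_2)) \leq L\, d_\X(x_1,x_2) + \ell\, d_\Y(y_1,y_2)$. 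Collecting terms and substituting the prescribed value $\theta = \frac{1-a}{2L}$, the coefficient of $d_\X(x_1,x_2)$ becomes $a + \theta L = \frac{1+a}{2}$, so that $d_\theta(w_i(x_1,y_1), w_i(x_2,y_2)) \leq c\, d_\theta((x_1,y_1),(x_2,y_2))$ with $c := \max\{\tfrac{1+a}{2}, \ell\} < 1$, where $a < 1$ (which makes the $u_i$ contractions and $\theta$ positive) is exactly what is used. Since $d_\theta$ is equivalent to the product metric, this shows $\cW_\loc$ is contractive in the paper's sense.

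The heart of the argument is the identity \eqref{GW}, which I would verify by a direct set computation. Writing $G(\ff) = \{(x, \ff(x)) \st x \in X\}$, intersecting with $X_i \times \Y$ extracts precisely the graph of $\ff_i = \ff\vert_{X_i}$, namely $\{(x, \ff(x)) \st x \in X_i\}$. Applying $w_i$ and performing the change of variable $x' = u_i(x)$ — legitimate since each $u_i$ is a bijection onto $u_i(X_i)$ — turns $w_i(G(\ff) \cap (X_i \times \Y))$ into $\{(x', v_i(u_i^{-1}(x'), \ff(u_i^{-1}(x')))) \st x' \in u_i(X_i)\}$. Comparing the fibre coordinate with the definition \eqref{RB} of $\Phi$, restricted to $u_i(X_i)$ where $\chi_{u_i(X_i)}$ is the only surviving term, identifies it as $\Phi\ff(x')$. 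Taking the union over $i$ and invoking property $\mathrm{(P)}$, which guarantees that $\{u_i(X_i)\}$ partitions $X$, assembles these pieces into exactly $\{(x', \Phi\ff(x')) \st x' \in X\} = G(\Phi\ff)$, which is \eqref{GW}.

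Finally, the attractor property is immediate: since $\ff$ is the fixed point of $\Phi$, we have $\Phi\ff = \ff$, and \eqref{GW} reduces to $\cW_\loc(G(\ff)) = G(\Phi\ff) = G(\ff)$. Thus $G(\ff)$ satisfies the self-referential equation \eqref{attr} and is therefore a (local) attractor of $\cW_\loc$. I expect the main obstacle to be the careful bookkeeping in the change-of-variable step of \eqref{GW}, and in particular the need to invoke property $\mathrm{(P)}$ at exactly the right place — to pass from the disjoint pieces over $u_i(X_i)$ back to all of $X$ without overlap or omission — rather than the routine contraction estimate, whose only delicate point is recognizing that the special choice of $\theta$ is precisely what forces the $d_\X$-coefficient below $1$.
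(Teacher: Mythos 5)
Your proposal is correct and follows essentially the same route as the paper: the same triangle-inequality estimate with an intermediate point for the contractivity of the $w_i$ (yielding the contraction factor $\max\{\tfrac{1+a}{2}, s\} < 1$ after substituting $\theta = \tfrac{1-a}{2L}$), and the same set-level change-of-variable computation, invoking property (P), for the graph identity. The only difference is one of ordering and is harmless (arguably cleaner): you establish \eqref{GW} directly from the definition of $\Phi$ and then obtain the attractor property as a corollary of $\Phi\ff = \ff$, whereas the paper first verifies $\cW_\loc(G(\ff)) = G(\ff)$ by inserting the fixed-point equation $\ff\circ u_i(x) = v_i(x,\ff(x))$ into the same computation and then notes that \eqref{GW} follows.
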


\begin{proof}
We first show that $\{\X\times\Y; (X_i\times\Y, w_i)\st i\in \N_N\}$ is a contractive local IFS. For this purpose, let $(x_1,y_1), (x_2,y_2)\in \X_i\times\Y$, $i\in \N_N$, and note that
\begin{align*}
d_\theta (w_i(x_1,y_1), w_i(x_2,y_2)) & = d_\X (u_i (x_1), u_i(x_2)) + \theta d_\Y(v_i (x_1,y_1), v_i (x_2,y_2)) \\
& \leq a\, d_\X(x_1, x_2) + \theta d_\Y(v_i (x_1,y_1), v_i (x_2,y_1))\\ 
& \qquad + \theta d_\Y(v_i (x_2,y_1), v_i (x_2,y_2))\\
& \leq (a + \theta L) d_\X(x_1, x_2) + \theta\,s \,d_\Y(y_1,y_2) \\
& \leq q\,d_\theta ((x_1,y_1), (x_2,y_2)).
\end{align*}
Here we used \eqref{scon} and set $q:= \max\{a + \theta L, s\} < 1$. 

The graph $G(\ff)$ of $\ff$ is an attractor for the contractive local IFS $\cW_\loc$, for
\begin{align*}
\cW_\loc (G(\ff)) & = \bigcup_{i=1}^N w_i (G(\ff)\cap \X_i) = \bigcup_{i=1}^N w_i (\{(x, \ff(x)\st x\in \X_i\}\\
& = \bigcup_{i=1}^N \{(u_i (x), v_i(x, \ff(x)))\st x\in \X_i\} = \bigcup_{i=1}^N \{(u_i(x), \ff(u_i(x)))\st x\in \X_i\}\\ 
& = \bigcup_{i=1}^N \{(x, \ff(x)) \st x\in u_i(\X_i)\} = G(\ff).
\end{align*}
That \eqref{GW} holds follows from the above computation and the fixed point equation for $\ff$ written in the form
\[
\ff\circ u_i (x) = v_i (x, \ff (x)), \quad x\in \X_i, \quad i\in \N_N.\qedhere
\]
\end{proof}
\section{Tensor Products of Local Fractal Functions}\label{sec5}
In this section, we define the tensor product of local fractal functions thus extending the previous construction to higher dimensions.

For this purpose, we follow the notation and of the previous section, and assume that $X$ and $\oX$ are nonempty connected sets, and $\{X_i\st i\in \N_N\}$ and $\{\oXi\st i\in \N_N\}$ are families of nonempty connected subsets of $X$ and $\oX$, respectively. Analogously, we define finite families of bijections $\{u_i: X_i\to X\st i\in \N_N\}$ and $\{\ou_i: \oXi\to X\st i\in \N_N\}$ requiring both to satisfy condition (P).

Furthermore, we assume that $(\Y, \|\bullet\|_\Y)$ is a \textbf{Banach algebra}, i.e., a Banach space that is also an associate algebra for which multiplication is continuous: 
$$
\|y_1y_2\|_\Y \leq \|y_1\|_\Y\,\|y_2\|_\Y, \quad\forall\,y_1,y_2\in \Y. 
$$
Let $f\in B(X,\Y)$ and $\of\in B(\oX,\Y)$. The tensor product of $f$ with $\of$, written $f\otimes\of: X\times\oX\to \Y$, with values in $\Y$ is defined by
\[
(f\otimes\of) (x,\ox) := f(x) \of(\ox),\quad\forall\,(x,\ox)\in X\times\oX.
\]
As $f$ and $\of$ are bounded, the inequality
\[
\|(f\otimes\of)(x,\ox)\|_{\Y} = \|f(x)\of(\ox\|_{\Y} \leq  \|f(x)\|_\Y \, \|\of(\ox)\|_\Y, 
\]
implies that $f\otimes\of$ is bounded. Under the usual addition and scalar multiplication of functions, the set
\[
B(X\times\oX, \Y) := \{f\otimes\of : X\times \oX\to \Y \st \text{$f\otimes\of$ is bounded}\}
\]
becomes a complete metric space when endowed with the metric
\[
d(f\otimes\of, g\otimes\og) := \sup_{x\in X} \|f(x) - g(x)\|_\Y + \sup_{\ox\in\oX} \|\of(\ox) - \og(\ox)\|_\Y.
\]
Now let $\Phi: B(X,\Y)\to B(X,\Y)$ and $\oPhi: B(\oX,\Y)\to B(\oX,\Y)$ be contractive RB-operators of the form \eqref{RB}. We define the tensor product of $\Phi$ with $\oPhi$ to be the RB-operator $\Phi\otimes\oPhi: B(X\times\oX, \Y)\to B(X\times\oX, \Y)$ given by
\[
(\Phi\otimes\oPhi)(f\otimes\of) := (\Phi f)\otimes (\oPhi \of).
\]
It follows that $\Phi\otimes\oPhi$ maps bounded functions to bounded functions. Furthermore, $\Phi\otimes\oPhi$ is contractive on the complete metric space $(B(X\times\oX, \Y),d)$. To see this, note that
\begin{align*}
\sup_{x\in X}\|(\Phi f)(x) &- (\Phi g)(x)\|_\Y + \sup_{\ox\in \oX}\|(\Phi \of)(\ox) - (\Phi \og)(\ox)\|_\Y \\
& \leq \ell \sup_{x\in X}\|f(x) - g(x)\|_\Y + \overline{\ell} \sup_{\ox\in \oX} \|\of(\ox) - \og(\ox)\|_\Y\\
& \leq \max\{\ell, \overline{\ell}\}\, d(f\otimes\of, g\otimes\og),
\end{align*}
where we used \eqref{estim} and denoted the uniform contractivity constant of $\oPhi$ by $\overline{\ell}$.

The unique fixed point of the RB-operator $\Phi\otimes\oPhi$ will be called a \textbf{tensor product local fractal function} and its graph a \textbf{tensor product local fractal surface}.
\section{Lebesgue Spaces $L^p(\R)$}\label{sec6}
We may construct local fractal functions on spaces other than $B(X,\Y)$. (See also \cite{BHM1}.) In this section, we derive conditions under which local fractal functions are elements of the Lebesgue spaces $L^p$ for $p>0$. To this end, we assume again that the functions $v_i$ are given by \eqref{specialv} and that $\X := [0,1]$ and $\Y := \R$. We consider the metric on $\R$ and $\X=[0,1]$ as being induced by the $L^1$-norm. Note that endowed with this norm $B(\X,\R)$ becomes a Banach space.

Recall that the Lebesgue spaces $L^p [0,1]$, $1\leq p\leq \infty$, are obtained as the completion of the space $C[0,1]$ of real-valued continuous functions on $[0,1]$ with respect to the $L^p$-norm
\[
\|f\|_{L^p} := \left(\int_{[0,1]} |f(x)|^p \,dx\right)^{1/p}.
\]
For $0 < p <1$, the spaces $L^p(\R)$ are defined as above but instead of a norm, a metric is used to obtain completeness. More precisely, define
\[
d_p (f,g) := \|f - g\|_{L^p}^p,
\]
where $\|\bullet\|_{L^p}$ is the norm introduced above. Then $(L^p(\R), d_p)$ is an $F$-space. (Note that the inequality $(a+b)^p \leq a^p + b^p$ holds for all $a,b\geq 0$.) For more details, we refer to \cite{rudin}.

We have the following result for RB-operators defined on the Lebesgue spaces $L^p[0,1]$, $0 < p \leq \infty$. The case $p\in [1, \infty]$ was already considered in \cite{BHM1}, but for the sake of completeness we reproduce the proof.

\begin{theorem}\label{thm7}
Suppose that $\{X_i \st i \in \N_N\}$ is a family of half-open intervals of $[0,1]$. Further suppose that $\{x_0 := 0 < x_1 < \cdots < x_N := 1\}$ is a partition of $[0,1]$ and that $\{u_i \st i \in\N_N\}$ is a family of affine mappings from $X_i$ onto $[x_{i-1}, x_i)$, $i = 1, \ldots, N-1$, and from $X_N^+ := X_N\cup u_N^{-1}(1-)$ onto $[x_{N-1},x_N]$, where $u_N$ maps $X_N$ onto $[x_{N-1}, x_N)$. 

The operator $\Phi: L^p [0,1]\to \R^{[0,1]}$, $p\in (0,\infty]$, defined by
\be\label{Phi}
\Phi g := \sum_{i=1}^N (\lambda_i \circ u_i^{-1})\,\chi_{u_i(X_i)} + \sum_{i=1}^N (S_i\circ u_i^{-1})\cdot (g_i\circ u_i^{-1})\,\chi_{u_i(X_i)},
\ee
where $g_i = g\vert_{X_i}$, $\lambda_i\in L^p (X_i, [0,1])$ and $S_i\in L^\infty (X_i, \R)$, $i \in\N_N$, maps $L^p [0,1]$ into itself. Moreover, if 
\be\label{condition}
\begin{cases}
\displaystyle{\sum_{i=1}^N}\, a_i \,\|S_i\|_{\infty, X_i}^p < 1, & p\in (0,1);\\ \\
\left(\displaystyle{\sum_{i=1}^N}\, a_i \,\|S_i\|_{\infty, X_i}^p\right)^{1/p} < 1, & p\in[1,\infty);\\ \\
\max\left\{\|S_i\|_{\infty,X_i}\st i\in\N_N\right\} < 1, & p = \infty,
\end{cases}
\ee
where $a_i$ denotes the Lipschitz constant of $u_i$, then $\Phi$ is contractive on $L^p [0,1]$ and its unique fixed point $\ff$ is an element of $L^p [0,1]$.
\end{theorem}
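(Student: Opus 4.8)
The plan is to verify directly that $\Phi$ maps $L^p[0,1]$ into itself and then to estimate $d_p(\Phi g, \Phi h)$, exploiting the one structural feature that drives everything: the images $u_i(X_i) = [x_{i-1},x_i)$ form a partition of $[0,1]$, so the $N$ summands defining $\Phi$ have pairwise disjoint supports. This disjointness converts the $L^p$-norm (respectively the $F$-space metric) of the sum into the sum of the contributions from the individual pieces, with no cross terms to control.

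For well-definedness I would compute $\|\Phi g\|_{L^p}^p$ by splitting the integral over $[0,1]$ into the disjoint pieces $u_i(X_i)$ and performing on each the affine change of variables $x = u_i(t)$, which contributes the Jacobian factor $a_i = |u_i'|$, the Lipschitz constant of the affine map $u_i$. This reduces each piece to $a_i \int_{X_i} |\lambda_i + S_i\, g_i|^p$, which is finite because $\lambda_i \in L^p(X_i)$, $S_i \in L^\infty(X_i)$, and $g \in L^p$; the separation of the $\lambda_i$ and $S_i\, g_i$ contributions uses subadditivity $(a+b)^p \le a^p + b^p$ when $p<1$ and Minkowski's inequality when $p\ge 1$. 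The case $p=\infty$ is immediate, each piece being bounded by $\|\lambda_i\|_{\infty,X_i} + \|S_i\|_{\infty,X_i}\|g\|_{L^\infty}$.

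The contraction estimate is the heart of the argument. Since the $\lambda_i$-terms do not depend on $g$, they cancel in $\Phi g - \Phi h$, leaving $\sum_i (S_i\circ u_i^{-1})\big((g_i - h_i)\circ u_i^{-1}\big)\,\chi_{u_i(X_i)}$. Using disjointness together with the same change of variables, for $p\in[1,\infty)$ one obtains
\[
\|\Phi g - \Phi h\|_{L^p}^p = \sum_{i=1}^N a_i \int_{X_i} |S_i|^p\,|g-h|^p \;\le\; \left(\sum_{i=1}^N a_i\,\|S_i\|_{\infty,X_i}^p\right)\|g-h\|_{L^p}^p,
\]
where each $\int_{X_i}|g-h|^p$ is bounded by the full $\|g-h\|_{L^p}^p$. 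Taking $p$-th roots yields the factor $\big(\sum_i a_i\|S_i\|_{\infty,X_i}^p\big)^{1/p}$ of \eqref{condition}. For $0<p<1$ the governing metric is $d_p = \|\cdot\|_{L^p}^p$, so no root is taken and the contraction factor is $\sum_i a_i\|S_i\|_{\infty,X_i}^p$ itself; for $p=\infty$ the supremum over the disjoint pieces gives $\max_i \|S_i\|_{\infty,X_i}$. In each regime condition \eqref{condition} forces this factor below $1$, and since $L^p[0,1]$ is complete (a Banach space for $p\ge 1$, an $F$-space for $0<p<1$), the Banach Fixed Point Theorem produces the unique fixed point $\ff\in L^p[0,1]$.

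The points requiring care, and hence the main obstacle, are bookkeeping rather than conceptual: correctly tracking the Jacobian factor $a_i$ through the change of variables, since it is precisely what makes the $a_i$ appear in \eqref{condition}; and recognizing that although the image intervals $u_i(X_i)$ partition $[0,1]$, the domains $X_i$ need not be disjoint, so each local integral $\int_{X_i}|g-h|^p$ must be bounded by the global norm rather than summed into it. The special treatment of $X_N^+$ and of the endpoint $x_N = 1$ is a measure-zero adjustment that leaves every $L^p$ quantity unchanged.
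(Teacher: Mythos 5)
Your proposal is correct and follows essentially the same route as the paper's proof: split over the disjoint images $u_i(X_i)$, change variables to pick up the Jacobian factor $a_i$, pull out $\|S_i\|_{\infty,X_i}$, bound each local integral $\int_{X_i}|g-h|^p$ by the global $L^p$-norm, and conclude via the Banach Fixed Point Theorem, with the three regimes $p\in(0,1)$, $p\in[1,\infty)$, $p=\infty$ handled exactly as in the paper (metric $d_p$ without roots, $p$-th roots, and supremum estimate, respectively). The points you flag as requiring care—the provenance of $a_i$ and the non-disjointness of the domains $X_i$—are precisely the steps the paper's computation carries out.
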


\begin{proof}
Note that under the hypotheses on the functions $\lambda_i$ and $S_i$ as well as the mappings $u_i$, $\Phi f$ is well-defined and an element of $L^p[0,1]$. It remains to be shown that under conditions \eqref{condition}, $\Phi$ is contractive on $L^p[0,1]
$. 

We start with $1\leq p<\infty$. If $g,h \in L^p [0,1]$ then
\begin{align*}
\|\Phi g - \Phi h\|^{L^p}_{p} & = \int\limits_{[0,1]} |\Phi g (x) - \Phi h (x)|^p dx\\
& = \int\limits_{[0,1]} \left|\sum_{i=1}^{N} (S_i\circ u_i^{-1})(x) [(g_i\circ u_i^{-1})(x) - (h_i\circ u_i^{-1})(x)]\,\chi_{u_i(X_i)}(x)\right|^p\, dx\\
& = \sum_{i=1}^{N}\,\int\limits_{[x_{i-1},x_i]}\left| (S_i\circ u_i^{-1})(x) [(g_i\circ u_i^{-1})(x) - (h_i\circ u_i^{-1})(x)]\right|^p\,dx\\
& = \sum_{i=1}^{N}\,a_i\,\int\limits_{X_i} \left| S_i (x) [g_i(x)- h_i(x)]\right|^p\,dx\\
&  \leq \sum_{i=1}^{N}\,a_i\,\|S_i\|^p_{\infty, X_i}\,\int\limits_{X_i} \left| g_i(x) - h_i(x)\right|^p\,dx = \sum_{i=1}^{N}\,a_i\,\|S_i\|^p_{\infty, X_i}\,\|g_i - h_i\|^p_{{L^p},X_i}\\
& = \sum_{i=1}^{N}\,a_i\,\|S_i\|^p_{\infty, X_i}\,\|g_i - h_i\|^p_{{L^p}} \leq \left(\sum_{i=1}^{N}\,a_i\,\|S_i\|^p_{\infty, X_i}\right) \|g - h\|^p_{{L^p}}.
\end{align*}
The case $0<p<1$ now follows in similar fashion. We again have after substitution and rearrangement 
\begin{align*}
d_p(\Phi g,\Phi h) & = \sum_{i=1}^{N}\,a_i\,\int\limits_{X_i} \left| S_i (x) [g_i(x)- h_i(x)]\right|^p\,dx\\
& = \sum_{i=1}^{N}\,a_i\,\|S_i\|^p_{\infty, X_i}\,\|g_i - h_i\|^p_{{L^p}} \leq \left(\sum_{i=1}^{N}\,a_i\,\|S_i\|^p_{\infty, X_i}\right) \|g - h\|^p_{{L^p}}\\
& = \left(\sum_{i=1}^{N}\,a_i\,\|S_i\|^p_{\infty, X_i}\right) d_p(g, h).
\end{align*}
Now let $p= \infty$. Then
\begin{align*}
\|\Phi g - \Phi h\|_{\infty} & = \left\|\sum_{i=1}^{N} (S_i\circ u_i^{-1})(x) [(g_i\circ u_i^{-1})(x) - (h_i\circ u_i^{-1})(x)]\,\chi_{u_i(X_i)}(x)\right\|_\infty\\
& \leq \max_{i\in\N_N}\,\left\| (S_i\circ u_i^{-1})(x) [(g_i\circ u_i^{-1})(x) - (h_i\circ u_i^{-1})(x)]\right\|_{\infty,X_i}\\
& \leq \max_{i\in\N_N}\|S_i\|_{\infty,X_i} \left\|g_i - h_i]\right\|_{\infty,X_i} = \max_{i\in\N_N}\|S_i\|_{\infty,X_i} \left\|g_i - h_i]\right\|_{\infty}\\
&  \leq \left(\max_{{i\in\N_N}}\,\|S_i\|_{\infty,X_i}\right) \left\|g - h]\right\|_{\infty}
\end{align*}
These calculations prove the claims.
\end{proof}

\begin{remark}
The proof of the theorem shows that the conclusions also hold under the assumption that the family of mappings $\{u_i: X_i\to \X\st i\in \N_N\}$ is generated by the following functions.
\begin{enumerate}
\item[$\mathrm{(i)}$] Each $u_i$ is a bounded diffeomorphism of class $C^k$, $k\in \N\cup\{\infty\}$, from $X_i$ to $[x_{i-1}, x_i)$ (obvious modification for $i = N$). In this case, the $a_i$'s are given by $a_i = \sup\{\left\vert \frac{du_i}{dx} (x)\right\vert\st x$ $\in X_i\}$, $i\in\N_N$.
\item[$\mathrm{(ii)}$] Each $u_i$ is a bounded invertible function in $C^\omega$, the class of real-analytic functions from $X_i$ to $[x_{i-1}, x_i)$ and its inverse is also in $C^\omega$. (Obvious modification for $i = N$.) The $a_i$'s are given as above in item $\mathrm{(i)}$.
\end{enumerate}
\end{remark}
\section{Smoothness spaces $C^n$ and H\"older Spaces $\dot{C}^s$}\label{sec7}
Our next objective is to derive conditions on the partition $\{X_i\st i\in \N_N\}$ of $\X:= [0,1]$ and the function tuples $\blambda$ and $\bS$ so that we obtain a continuous or even differentiable local fractal function $\ff:[0,1]\to\R$. To this end, consider the complete metric linear space $C := C^0 (\X) := \{f: [0,1]\to \R\st \text{$f$ continuous}\}$ endowed with the supremum norm $\|\bullet\|_\infty$.

\subsection{Binary partition of $\X$}
We introduce the following subsets of $\X=[0,1]$ which play an important role in fractal-based numerical analysis as they give discretizations for efficient computations. For more details, we refer to \cite{BHM1} and partly to \cite{BHM}. 

Assume that $N\in 2\N$ and let
\be\label{subsets}
\X_{2j-1} := \X_{2j} := \left[\frac{2(j-1)}{N},\frac{2j}{N}\right], \quad j = 1, \ldots, \frac2N.
\ee
Define affine mappings $u_i:\X_i\to [0,1]$ so that
\be\label{uis}
u_i(\X_i) := \left[\frac{i - 1}{N},\frac{i}{N}\right], \quad i= 1, \ldots, N.
\ee
In explicit form, the $u_i$'s are given by
\[
u_{2j-1} (x) = \frac{x}{2} + \frac{j-1}{N} \quad\text{and}\quad u_{2j} (x) = \frac{x}{2} + \frac{j}{N}, \quad x \in \X_{2j-1} = \X_{2j}.
\]
Note that here $u_i(\X_i) \subsetneq \X_i$, $\forall\,i\in\N_N$. Clearly, $\{u_i(\X_i)\st i\in \N_N\}$ is a partition of $[0,1]$. We denote the distinct endpoints of the partitioning intervals $\{u_i(\X_{i})\}$ by $\{x_0 < x_1 <\ldots < x_{N}\}$ where $x_0 = 0$ and $x_N = 1$, and refer to them as \textbf{knot points} or simply as \textbf{knots}.

Furthermore, we assume that we are given interpolation values at the endpoints of the intervals $\X_{2j-1} = \X_{2j}$:
\be\label{I}
\mathscr{I} := \left\{(x_{2j}, y_j)\st j = 0, 1, \ldots, N/2\right\}.
\ee
Let 
$$
C_{\mathscr{I}} := \{f\in C \st f(x_{2j}) = y_j, \,\forall\, j = 0, 1, \ldots, N/2\}.
$$
Then $C_{\mathscr{I}}$ is a closed metric subspace of $C$. We consider an RB operator $\Phi$ of the form \eqref{eq3.4} acting on $C_{\mathscr{I}}$. 

In order for $\Phi$ to map $C_{\mathscr{I}}$ into itself one needs to require that $\lambda_i, S_i \in C(\X_i) := C(\X_i,\R) := \{f: \X_i\to \R\st \text{$f$ continuous}\}$ and that
\be\label{intcon}
y_{j-1} = \Phi f (x_{2(j-1)}) \quad \wedge \quad y_{j} = \Phi f (x_{2j}), \quad j = 1, \ldots, N/2,
\ee
where $x_{2j} := (2j)/N$. Note that the preimages of the knots $x_{2(j-1)}$ and $x_{2j}$ are the endpoints of $\X_{2j-1} = \X_{2j}$. Substituting the expression for $\Phi$ into \eqref{intcon} and collecting terms yields
\be\label{intcons}
\begin{split}
\lambda_{2j-1} (x_{2(j-1)}) + \left(S_{2j-1}(x_{2(j-1)}) - 1\right) y_{j-1} & = 0,\\
\lambda_{2j} (x_{2j}) + \left(S_{2j}(x_{2j}) - 1\right) y_{j} & = 0,
\end{split}
\ee
for all $j=1, \ldots, N/2$.

To ensure continuity of $\Phi f$ across $[0,1]$, the following join-up conditions at the oddly indexed knots need to be imposed. (They are the images of the midpoints of the intervals $\X_{2j-1} = \X_{2j}$.)
\be\label{prejoin}
\Phi f (x_{2j-1}-)  = \Phi f (x_{2j-1}+) , \quad j = 1, \ldots, N/2.
\ee
A simple calculation gives
\be\label{joinup}
\lambda_{2j} (x_{2(j-1)}) + S_{2j} (x_{2(j-1)}) y_{j-1} = \lambda_{2j-1} (x_{2j}) + S_{2j-1} (x_{2j})  y_j,
\ee
for all $j = 1, \ldots, N/2$. In case all functions $\lambda_i$ and $S_i$ are constant, \eqref{joinup} reduces to the condition given in \cite[Example 2]{BHM}. Two tuples of functions $\blambda, \bS \in \underset{i=1}{\overset{N}{\times}} C(\X_i)$ are said to have property (J) if they satisfy \eqref{intcons} and \eqref{joinup}.

We summarize these results in the next theorem.
\begin{theorem}\label{thm8}
Let $\X:= [0,1]$ and let $N\in 2\N$. Suppose that subsets of $\X$ are given by \eqref{subsets} and the associated mappings $u_i$ by \eqref{uis}. Further suppose that $\mathscr{I}$ is as in \eqref{I} and that $\blambda, \bS \in \underset{i=1}{\overset{N}{\times}} C(\X_i)$ have property (J). Then the RB operator $\Phi$ as given in \eqref{eq3.4} maps $C_\mathscr{I}$ into itself and is well-defined. If, in addition, $\max\left\{\|S_i\|_{\infty,\X_i}\st i\in\N_N\right\} < 1$, then $\Phi$ is a contraction and thus possesses a unique fixed point $\ff:[0,1]\to \R$ satisfying $\ff(x_{2j}) = y_j$, $\forall\, j = 0, 1, \ldots, N/2$.
\end{theorem}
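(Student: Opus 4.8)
The plan is to verify the three hypotheses of the Banach Fixed Point Theorem on the complete metric space $C_{\mathscr{I}}$: that $\Phi$ is well-defined, that it carries $C_{\mathscr{I}}$ into itself, and that it is a contraction. Well-definedness I would dispatch first. Since $\{u_i(\X_i)\st i\in\N_N\}$ partitions $[0,1]$ by \eqref{uis}, each $x\in[0,1]$ lies in exactly one cell $u_i(\X_i)$, so the characteristic functions in \eqref{eq3.4} select a single summand and the value $\Phi f(x) = \lambda_i(u_i^{-1}(x)) + S_i(u_i^{-1}(x))\,f(u_i^{-1}(x))$ is unambiguous. On the interior of each cell this is continuous because $\lambda_i,S_i\in C(\X_i)$, $u_i^{-1}$ is affine, and $f$ is continuous.

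The heart of the argument is to show that $\Phi f$ glues continuously across the interior knots and hits the prescribed data, so that $\Phi f\in C_{\mathscr{I}}$. I would handle the even and odd knots separately, following the computation preceding the theorem. At an even knot $x_{2j}$, the relevant preimages $u_{2j}^{-1}(x_{2j})$ and $u_{2j+1}^{-1}(x_{2j})$ are endpoints of the domains $\X_{2j}$ and $\X_{2j+1}$; substituting $f(x_{2k}) = y_k$ and invoking the two lines of the interpolation system \eqref{intcons} shows that both one-sided limits of $\Phi f$ equal $y_j$, which proves continuity there and simultaneously yields the interpolation identity $\Phi f(x_{2j}) = y_j$ (including the boundary points $x_0$ and $x_N$, handled by the $j=1$ and $j=N/2$ instances). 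At an odd knot $x_{2j-1}$, continuity is exactly the join-up equation \eqref{joinup}: the left limit works out to $\lambda_{2j-1}(x_{2j}) + S_{2j-1}(x_{2j})\,y_j$ and the right limit to $\lambda_{2j}(x_{2(j-1)}) + S_{2j}(x_{2(j-1)})\,y_{j-1}$, and property (J) forces these to coincide. Together these give $\Phi f\in C_{\mathscr{I}}$.

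For contractivity I would reuse the $p=\infty$ estimate from Theorem \ref{thm7}. Given $g,h\in C_{\mathscr{I}}$, the $\lambda_i$-terms cancel, and on each cell $|\Phi g(x) - \Phi h(x)| = |S_i(u_i^{-1}(x))|\,|g(u_i^{-1}(x)) - h(u_i^{-1}(x))| \leq \|S_i\|_{\infty,\X_i}\,\|g-h\|_\infty$; taking the supremum over $x$ and the maximum over $i$ gives $\|\Phi g - \Phi h\|_\infty \leq s\,\|g-h\|_\infty$ with $s := \max_i\|S_i\|_{\infty,\X_i} < 1$. Since $C_{\mathscr{I}}$ is closed in the complete space $(C,\|\bullet\|_\infty)$ it is itself complete, so the Banach Fixed Point Theorem produces a unique fixed point $\ff\in C_{\mathscr{I}}$, and membership in $C_{\mathscr{I}}$ delivers both the continuity of $\ff$ and the identities $\ff(x_{2j}) = y_j$.

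The one genuinely delicate point is bookkeeping rather than analysis: I must correctly match each knot to its two adjacent cells and to the correct preimage endpoints of the doubled domains $\X_{2j-1} = \X_{2j}$, since the two families of knots (even versus odd) together with the overlapping domains make the index tracking the easiest place to slip. Once that correspondence is pinned down, the interpolation and join-up conditions fall straight out of \eqref{intcons} and \eqref{joinup} with no further estimates, and the contraction step is entirely routine.
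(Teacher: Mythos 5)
Your proposal is correct and follows essentially the same route as the paper: the paper's own proof consists of the derivation of the interpolation conditions \eqref{intcons} and join-up conditions \eqref{joinup} in the discussion preceding the theorem (which you reproduce, with the knot-to-cell and preimage bookkeeping worked out correctly), plus an appeal to the $p=\infty$ case of Theorem \ref{thm7} for contractivity, exactly as you do. Your explicit verification that both one-sided limits at even knots equal $y_j$ and that property (J) glues the odd knots is just the spelled-out version of what the paper leaves implicit.
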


We call this unique fixed point a \textbf{continuous local fractal interpolation function}.

\begin{proof}
It remains to be shown that under the condition $\max\left\{\|S_i\|_{\infty,\X_i}\st i\in\N_N\right\} < 1$, $\Phi$ is contractive on $C_\mathscr{I}$. This, however, follows immediately from the case $p=\infty$ in the proof of Theorem \ref{thm7}.
\end{proof}

Theorem \ref{thm8} can be adapted to the setting of H\"older spaces. For this purpose, we introduce the \textbf{homogeneous H\"older space $\dot{C}^s(\Omega)$}, $0 < s < 1$, as the family of all functions $f\in C(\Omega)$, $\Omega\subseteq\R$, for which
\[
|f|_{\dot{C}^s(\Omega)} := \sup_{x\neq x' \in \Omega} \frac{|f(x) - f(x')|}{|x - x'|^s} < \infty.
\]
$|\bullet|_{\dot{C}^s(\Omega)}$ is a homogeneous semi-norm making $\dot{C}^s$ into a complete locally convex topological vector space, i.e., a Fr\'echet space.

\begin{theorem}
Let $\X:= [0,1]$ and let $N\in 2\N$. Assume that subsets of $\X$ are given by \eqref{subsets}, associated mappings $u_i$ by \eqref{uis}, and that $\mathscr{I}$ is as in \eqref{I}. Assume further that $\blambda\in  \underset{i=1}{\overset{N}{\times}} \dot{C}^s(\X_i)$, $\bS\in \underset{i=1}{\overset{N}{\times}} C(\X_i)$, and that have property condition (J). Then the RB operator \eqref{eq3.4} maps $\dot{C}^s := \dot{C}^s (\X)$ into itself and is well defined. Furthermore, if
\[
2^s \max\left\{\|S_i\|_{\infty,\X_i}\st i\in\N_N\right\} < 1
\]
then $\Phi$ is contractive on $\dot{C}^s$ and has a unique fixed point $\ff\in\dot{C}^s$.
\end{theorem}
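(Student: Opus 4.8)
The plan is to mirror the proof of Theorem \ref{thm7} in the case $p=\infty$, but with the supremum replaced throughout by the homogeneous H\"older seminorm $|\cdot|_{\dot{C}^s}$. First I would record the relevant geometry: by \eqref{uis} each $u_i$ is affine with slope $a_i = 1/2$, so its inverse $u_i^{-1}$ is Lipschitz with constant $1/a_i = 2$. For well-definedness I would verify that $\Phi f$ lies in $\dot{C}^s(\X)$ whenever $f$ does. On each image interval $u_i(\X_i)$ one has $\Phi f = (\lambda_i + S_i f_i)\circ u_i^{-1}$; since $\lambda_i\in\dot{C}^s(\X_i)$, the restriction $f_i = f\vert_{\X_i}$ is H\"older, and composition with the affine $u_i^{-1}$ preserves the H\"older class, each piece is H\"older, while property (J) (the join-up condition \eqref{joinup}) guarantees that the pieces match continuously at the knots, so $\Phi f$ is continuous on $[0,1]$.

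Next comes the core contraction estimate. For $g,h\in\dot{C}^s$ the affine parts cancel, leaving $\Phi g - \Phi h = \sum_{i} (S_i\circ u_i^{-1})\,((g_i-h_i)\circ u_i^{-1})\,\chi_{u_i(\X_i)}$. Writing $\psi := g-h$, I would first bound the increment for a pair $x,x'$ lying in the \emph{same} image interval $u_i(\X_i)$: pulling the multiplier out through $\|S_i\|_{\infty,\X_i}$ and using that $u_i^{-1}$ stretches distances by the factor $2$ gives
\[
|(\Phi g - \Phi h)(x) - (\Phi g - \Phi h)(x')| \le \|S_i\|_{\infty,\X_i}\,|u_i^{-1}(x) - u_i^{-1}(x')|^s\,|\psi|_{\dot{C}^s} = 2^s\,\|S_i\|_{\infty,\X_i}\,|\psi|_{\dot{C}^s}\,|x-x'|^s.
\]
Taking the maximum over $i$ produces exactly the claimed factor $2^s\max_i\|S_i\|_{\infty,\X_i}$; here $2^s$ is the reciprocal slope $(1/a_i)^s$ of the contraction $u_i$ raised to the H\"older exponent $s$. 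I would note that for this step to be clean the multiplier must pull out as a constant, which is immediate when the $S_i$ are constant and in general requires the $S_i$ themselves to be H\"older so that the product $S_i\psi_i$ stays controlled.

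The step I expect to be the main obstacle is passing from this per-interval bound to a bound on the full seminorm $|\Phi g - \Phi h|_{\dot{C}^s(\X)}$, which is a supremum over \emph{all} pairs $(x,x')$, including pairs lying in different partition intervals. The local bounds do not patch together automatically: for $0<s<1$ a chain of subintervals satisfies $\sum_k \ell_k^s \ge (\sum_k \ell_k)^s$, which is the wrong direction for a naive telescoping argument. To handle this I would use that both $\Phi g$ and $\Phi h$ realize the same data $\mathscr{I}$, so their difference $\Phi g - \Phi h$ vanishes at every even knot $x_{2j}$; given $x,x'$ in distinct intervals I would telescope to the nearest such knot, where the difference is zero, and estimate each remaining piece by its distance to that knot, which is dominated by $|x-x'|$. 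Keeping the constant under control here is the delicate point, and it is precisely where the continuity supplied by (J) and the homogeneous (vanishing-at-knots) structure are essential.

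Finally, I would invoke the Banach Fixed Point Theorem. On the affine subspace of $\dot{C}^s$ consisting of functions that realize the interpolation data, the seminorm becomes a genuine norm (it vanishes only on constants, which the fixed data excludes) and the underlying space is complete, so the contraction constant $2^s\max_i\|S_i\|_{\infty,\X_i} < 1$ yields a unique fixed point $\ff\in\dot{C}^s$.
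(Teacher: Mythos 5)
Your strategy is the paper's strategy---the per-interval bound with the factor $2^s\|S_i\|_{\infty,\X_i}$, coming from $|u_i^{-1}(x)-u_i^{-1}(x')|=2|x-x'|$, is exactly the computation in the paper's proof---but the two caveats you raise are precisely the points where the paper's own argument is unsound, so your diagnosis is correct. First, the paper pulls $\max_i\|S_i\|_{\infty,\X_i}$ out of a difference of products $S_i(a)\psi(a)-S_{i'}(b)\psi(b)$, which is invalid unless the $S_i$ are constant: for non-constant $S_i$ one picks up the term $|S_i(a)-S_i(b)|\,|\psi(b)|$, which is not $O(|a-b|^s)$ when $S_i$ is merely continuous; indeed multiplication by a continuous non-H\"older function does not even preserve $\dot{C}^s$, so the hypothesis $\bS\in\underset{i=1}{\overset{N}{\times}}C(\X_i)$ is already too weak for the well-definedness claim. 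Second, the paper applies the identity $|x-x'|^s=2^{-s}|u_i^{-1}(x)-u_{i'}^{-1}(x')|^s$ to pairs with $i\neq i'$, where it is false: as $x\uparrow x_{2j-1}$ and $x'\downarrow x_{2j-1}$, the preimages $u_{2j-1}^{-1}(x)\to x_{2j}$ and $u_{2j}^{-1}(x')\to x_{2j-2}$ remain a distance $2/N$ apart while $|x-x'|\to 0$. So the cross-interval pairs you single out are exactly where the proof must be fought; the paper simply skips the fight.

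The genuine gap is that your proposal never carries out that step, and the telescoping you sketch cannot produce the constant claimed in the theorem. Write $M:=\max_i\|S_i\|_{\infty,\X_i}$ and $\psi:=\Phi g-\Phi h$, and work on $C_{\mathscr{I}}\cap\dot{C}^s$ (one must, both so that $\psi$ vanishes at all knots and so that $\Phi$ preserves continuity at the odd knots). For $x,x'$ in different intervals your argument gives
\[
|\psi(x)-\psi(x')|\le|\psi(x)|+|\psi(x')|\le 2^sM\,|g-h|_{\dot{C}^s}\left(a^s+b^s\right),\qquad a+b\le|x-x'|,
\]
where $a,b$ are the distances from $x,x'$ to the intervening knots, and the sharp elementary bound $a^s+b^s\le 2^{1-s}(a+b)^s$ turns this into the Lipschitz constant $2M$, not $2^sM$. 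This loss is not an artifact of the method: take $N=2$, $S_1=S_2=M$ constant, and $g-h=\phi$ with $|\phi|_{\dot{C}^s}\approx1$, $\phi\approx -t^s$ near $0$ and $\phi\approx(1-t)^s$ near $1$; then for $x=\tfrac12-\eps$, $x'=\tfrac12+\eps$ one finds $|\psi(x)-\psi(x')|\approx 2M(2\eps)^s=2M|x-x'|^s$, so the seminorm-Lipschitz constant of $\Phi$ genuinely equals $2M$. Hence under the stated hypothesis $2^sM<1$ (which permits $2M>1$ whenever $s<1$) the operator need not be a contraction in $|\cdot|_{\dot{C}^s}$ at all; what your route (or the paper's, repaired) actually proves is the weaker statement with hypothesis $2\max_i\|S_i\|_{\infty,\X_i}<1$, and constant or H\"older $S_i$. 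To reach the theorem as stated one must abandon contraction-in-seminorm altogether, e.g., take the continuous fixed point $\ff$ supplied by Theorem \ref{thm8} and prove $\ff\in\dot{C}^s$ directly by a scale-by-scale oscillation estimate from the self-referential equation. Your closing paragraph (the seminorm is a complete norm on the interpolation-fixing affine subspace) is correct, but it rests on precisely the contraction estimate that is missing.
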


In case the last conclusion of the above theorem holds, we say that the fixed point $\ff$ is a \textbf{local fractal function of class $\dot{C}^s$}.
\begin{proof}
First we show that $\Phi f\in \dot{C}^s$. For $x,x'\in [0,1]$, note that there exist $i,i'\in \N_N$ so that $x\in u_i(\X_i)$ and $x'\in u_{i'}(\X_{i'})$. Therefore,
\begin{align*}
|\Phi f(x) - \Phi f(x')| & \leq \left\vert \lambda_i(u_i^{-1}(x)) - \lambda_{i'}(u_{i'}^{-1}(x')) \right\vert \\
& \quad + \left\vert(S_i (u_i^{-1}(x))\cdot (f_i(u_i^{-1}(x)) - (S_{i'} (u_{i'}^{-1}(x'))\cdot (f_{i'}(u_{i'}^{-1}(x'))\right\vert\\
& \leq \left\vert \lambda_i(u_i^{-1}(x)) - \lambda_{i'}(u_{i'}^{-1}(x')) \right\vert\\
& \quad + \max\left\{\|S_i\|_{\infty,\X_i}\right\}
\left\vert f_i(u_i^{-1}(x)) - f_{i'}(u_{i'}^{-1}(x'))\right\vert.
\end{align*}
Using the fact that $|x - x'|^s = 2^{-s} |u_i^{-1}(x) - u_{i'}^{-1}(x')|$ and employing the the properties of the supremum, we thus obtain
\begin{align*}
|\Phi f|_{\dot{C}^s} \leq 2^s \left(\sum_{i\in\N_N} |\lambda_i|_{\dot{C}^s(X_i)} + \max\left\{\|S_i\|_{\infty,\X_i}\right\} |f|_{\dot{C}^s}\right) < \infty.
\end{align*}
To establish the contractivity of $\Phi$, note that
\begin{align*}
|(\Phi f - \,&\Phi g)(x) - (\Phi f - \Phi g)(x')| = \\
& |S_i (u_i^{-1}(x))\cdot (f_i - g_i)(u_i^{-1}(x)) - S_{i'} (u_{i'}^{-1}(x'))\cdot (f_{i'} - g_{i'})(u_{i'}^{-1}(x'))|\\
& \leq \max\left\{\|S_i\|_{\infty,\X_i}\right\} |(f_i - g_i)(u_i^{-1}(x)) - (f_{i'} - g_{i'})(u_{i'}^{-1}(x'))|
\end{align*}
As above, using again $|x - x'|^s = 2^{-s} |u_i^{-1}(x) - u_{i'}^{-1}(x')|$ and that $f$ is defined on all of $[0,1]$, this yields
\[
|\Phi f - \Phi g|_{\dot{C}^s} \leq 2^s \max\left\{\|S_i\|_{\infty,\X_i}\right\} |f - g|_{\dot{C}^s}.\qedhere
\]
\end{proof}

Just as in the case of splines, we can impose join-up conditions and choose the function tuples $\blambda$ and $\bS$ so that the RB operator \eqref{eq3.4} maps the space of continuously differentiable functions into itself. More precisely, suppose that $\Omega\subseteq \R$. Let $C^n (\Omega):= C^n(\Omega,\R) := \{f: \Omega\to \R\st D^k f \in C, \,\forall k = 1, \ldots,n\}$, where $D$ denotes the ordinary differential operator. The linear space $C^n(\Omega)$ is a Banach space under the norm
\[
\|f\|_{C^n(\Omega)} := \sum_{k=0}^n \|D^k f\|_{\infty, \Omega}.
\]
We write $C^n$ for $C^n (\X)$, and will delete the $\Omega$ from the norm notation when $\Omega := \X = [0,1]$.

As we require $C^n$-differentiability across $\X =[0,1]$, we impose $C^n$-interpolation values at the endpoints of the intervals $\X_{2j-1} = \X_{2j}$:
\be\label{In}
\mathscr{I}^{(n)} := \left\{(x_{2j}, \by_j^{(n)})\st j = 0, 1, \ldots, N/2\right\},
\ee
where $\by_j^{(n)} := (y_j^{(0)},y_j^{(1)},\ldots,y_j^{(n)})^T\in \R^{n+1}$ is a given interpolation vector. Let 
$$
C^n_{\mathscr{I}^{(n)}} := \{f\in C^n \st D^k f(x_{2j}) = y_j^{(k)}, \,\forall\, k = 0,1,\ldots, n; \,\forall\, j = 0, 1, \ldots, N/2\}.
$$ 
Then $C^n_{\mathscr{I}^{(n)}}$ is a closed metric subspace of $C^n$.

In order for $\Phi$ to map $C^n_{\mathscr{I}^{(n)}}$ into itself, choose $\lambda_i, S_i \in C^n(\X_i)$, $i\in\N_N$, so that
\be\label{diffintcon}
y_{j-1}^{(k)} = D^k\Phi f (x_{2(j-1)}) \quad \wedge \quad y_{j}^{(k)} = D^k \Phi f (x_{2j}),
\ee
for all $k = 0, 1, \ldots, n$ and for all $j = 1, \ldots, N/2$.

At the midpoints of the intervals $\X_{2j-1} = \X_{2j}$, the function tuples $\blambda$ and $\bS$ need to additionally satisfy the $C^n$-join-up conditions
\be\label{diffcon}
D^k \Phi f (x_{2j-1}-)  = D^k\Phi f (x_{2j-1}+) , \quad\,\forall k = 0,1,\ldots, n;\,\forall j = 1, \ldots, N/2.
\ee

\begin{theorem}
Let $\X:= [0,1]$ and let $N\in 2\N$. Assume that subsets of $\X$ are given by \eqref{subsets}, associated mappings $u_i$ by \eqref{uis}, and that $\mathscr{I}^{(n)}$ is as in \eqref{In}. Assume further that $\blambda, \bS\in  \underset{i=1}{\overset{N}{\times}} C^n(\X_i)$, and that they satisfy conditions \eqref{diffintcon} and \eqref{diffcon}. Then the RB operator \eqref{eq3.4} maps $C^n_{\mathscr{I}^{(n)}}$ into itself and is well defined. Furthermore, if
\be\label{123}
2^n \max_{i\in\N_N}\max_{k=0,1,\ldots,n} \left\{\sum_{l=0}^k \binom{n-k+l}{l}\|D^lS_i\|_{\infty,\X_i}\right\} < 1
\ee
then $\Phi$ is contractive on $C^n_{\mathscr{I}^{(n)}}$ and has a unique fixed point $\ff\in\C^n_{\mathscr{I}^{(n)}}$.
\end{theorem}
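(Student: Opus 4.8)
The plan is to prove the theorem in two movements: first, that the RB operator $\Phi$ maps $C^n_{\mathscr{I}^{(n)}}$ into itself (well-definedness and preservation of the interpolation/smoothness data), and second, that under condition \eqref{123} it is a contraction on this space, so the Banach Fixed Point Theorem yields the unique fixed point $\ff\in C^n_{\mathscr{I}^{(n)}}$. For the well-definedness, I would observe that on each subinterval $u_i(\X_i)$ the function $\Phi f$ is given by $\lambda_i\circ u_i^{-1} + (S_i\circ u_i^{-1})\cdot(f_i\circ u_i^{-1})$, which lies in $C^n(u_i(\X_i))$ since $\lambda_i, S_i\in C^n(\X_i)$, $f_i\in C^n(\X_i)$, and $u_i$ is affine (hence $C^\infty$ with $C^\infty$ inverse); products and compositions of $C^n$ functions are $C^n$. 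The interpolation conditions \eqref{diffintcon} guarantee that the correct derivative values $y_j^{(k)}$ are attained at the even knots, and the join-up conditions \eqref{diffcon} guarantee that the one-sided derivatives agree at the odd knots, so the piecewise-defined $\Phi f$ patches together into a globally $C^n$ function lying in $C^n_{\mathscr{I}^{(n)}}$.

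The heart of the argument is the contractivity estimate, and this is where the main obstacle lies. The difficulty is that the $C^n$ norm controls all derivatives $D^0,\ldots,D^n$ simultaneously, and differentiating the product $(S_i\circ u_i^{-1})\cdot((f_i-g_i)\circ u_i^{-1})$ up to order $k$ via the Leibniz rule produces cross terms in which derivatives of $S_i$ multiply lower-order derivatives of $f_i-g_i$. Thus one cannot estimate each $D^k(\Phi f-\Phi g)$ in isolation; the derivatives are coupled. The plan is to compute, on a single subinterval and using that $u_i^{-1}$ is affine with $|Du_i^{-1}| = a_i^{-1} = 2$ (from \eqref{uis}), the $k$-th derivative
\[
D^k\bigl[(S_i\circ u_i^{-1})\cdot((f_i-g_i)\circ u_i^{-1})\bigr] = 2^k \sum_{l=0}^k \binom{k}{l}\,(D^l S_i\circ u_i^{-1})\cdot (D^{k-l}(f_i-g_i)\circ u_i^{-1}),
\]
the factor $2^k$ arising from the chain rule applied to the affine $u_i^{-1}$. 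Taking supremum norms and bounding each $\|D^{k-l}(f_i-g_i)\|_{\infty,\X_i}\le \|f-g\|_{C^n}$ gives a bound on $\|D^k(\Phi f-\Phi g)\|_\infty$; summing over $k=0,\ldots,n$ and reorganizing the resulting double sum is what produces the combinatorial coefficient $\binom{n-k+l}{l}$ appearing in \eqref{123}.

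The delicate bookkeeping step, which I expect to be the crux, is the reindexing of the double sum $\sum_{k=0}^n \sum_{l=0}^k$ so that it is dominated by $\|f-g\|_{C^n}$ times the quantity $2^n\max_k\sum_{l=0}^k\binom{n-k+l}{l}\|D^l S_i\|_{\infty,\X_i}$. The natural approach is to collect, for each fixed order $l$ of derivative falling on $S_i$, all the terms in which $D^l S_i$ appears, and to recognize the accumulated binomial weights as $\binom{n-k+l}{l}$ after switching the order of summation; the factor $2^n$ is the worst case over $k\le n$ of the chain-rule factor $2^k$. Once this algebraic identity is in place, taking the maximum over $i\in\N_N$ and invoking condition \eqref{123} shows $\|\Phi f-\Phi g\|_{C^n}\le c\,\|f-g\|_{C^n}$ with $c<1$, and the Banach Fixed Point Theorem on the complete space $C^n_{\mathscr{I}^{(n)}}$ finishes the proof. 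I would verify the binomial identity carefully, perhaps first for $n=1$ and $n=2$ to fix the combinatorics, before asserting the general coefficient.
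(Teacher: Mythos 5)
Your plan is essentially the paper's proof: Leibniz's rule together with the chain-rule factor $2^k$ coming from the affine $u_i^{-1}$ (slope $2$), a reindexing of the double sum to produce the coefficients $\binom{n-k+l}{l}$, factoring out the maximum over $k$, and the Banach Fixed Point Theorem on the closed (hence complete) subspace $C^n_{\mathscr{I}^{(n)}}$; the binomial identity you defer is indeed correct, since both arrangements assign the coefficient $\binom{j+l}{l}$ to the product $\|D^l S_i\|_{\infty,\X_i}\,\|D^{j}(f-g)\|_\infty$. One warning on order of operations: the reindexing must come \emph{first}, grouping terms by the order $j$ of the derivative falling on $f-g$ (not on $S_i$), so that at the very end you use $\sum_{j=0}^n\|D^{j}(f-g)\|_\infty=\|f-g\|_{C^n}$ exactly; if instead, as your second paragraph literally states, you bound each $\|D^{k-l}(f_i-g_i)\|_{\infty,\X_i}$ by $\|f-g\|_{C^n}$ before summing over $k$, the contraction condition you reach is $\sum_{k=0}^n 2^k\sum_{l=0}^k\binom{k}{l}\|D^lS_i\|_{\infty,\X_i}<1$ (for $n=1$: $3\|S_i\|_{\infty,\X_i}+2\|DS_i\|_{\infty,\X_i}<1$), which is strictly stronger than \eqref{123} (for $n=1$: $2\|S_i\|_{\infty,\X_i}+2\|DS_i\|_{\infty,\X_i}<1$), so that route would not prove the theorem as stated.
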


We refer to this fixed point $\ff$ as a l\textbf{ocal fractal function of class $C^n_{\mathscr{I}^{(n)}}$}.

\begin{proof}
The statements that $\Phi$ is well defined and maps $C^n_{\mathscr{I}^{(n)}}$ into itself is implied by the conditions imposed on $\blambda$ and $\bS$. It remains to be shown that under condition \eqref{123} the RB operator $\Phi$ is contractive. To this end, consider $f,g\in C^n_{\mathscr{I}^{(n)}}$. Then
\begin{align*}
D^k\Phi f (x) - &\, D^k\Phi g(x) = \sum_{i\in\N_N} D^k\left[S_i(u_i^{-1}(x)) \cdot (f_i(u_i^{-1}(x)) - g_i(u_i^{-1}(x)))\right]\chi_{u_i(\X_i)}\\
& = \sum_{i\in\N_N} \sum_{l=0}^k \binom{k}{l}\, 2^k\left[(D^{k-l}(f_i - g_i))(u_i^{-1}(x)) \cdot (D^l S_i)(u_i^{-1}(x)\right]\chi_{u_i(\X_i)},\\
\end{align*}
where we applied the Leibnitz Differentiation Rule. Therefore, 
\begin{align*}
\|D^k \Phi f - D^k \Phi g\|_\infty &\leq 2^k  \sum_{i\in\N_N} \sum_{l=0}^k \binom{k}{l} \|D^l S_i\|_{\infty,\X_i} \|D^{k-l} (f-g)\|_\infty.
\end{align*}
Hence, 
\begin{align*}
\|\Phi f - \Phi g\|_{C^n} & = \sum_{k=0}^n \|D^k \Phi f - D^k \Phi g\|_\infty\\
& \leq 2^n  \sum_{i\in\N_N}\sum_{k=0}^n \sum_{l=0}^k \binom{k}{l} \|D^l S_i\|_{\infty,\X_i} \|D^{k-l} (f-g)\|_\infty\\
& =  2^n  \sum_{i\in\N_N}\sum_{k=0}^n \sum_{l=0}^k \binom{n-k+l}{l} \|D^l S_i\|_{\infty,\X_i} \|D^{n-k} (f-g)\|_\infty
\end{align*}
The last equality is proven directly by computation or mathematical induction. Thus,
\[
\|\Phi f - \Phi g\|_{C^n} \leq \left(2^n  \max_{i\in\N_N}\max_{k=0,1,\ldots,n} \left\{\sum_{l=0}^k \binom{n-k+l}{l}\|D^lS_i\|_{\infty,\X_i}\right\}\right) \|f - g\|_{C^n},
\]
and the statement follows.
\end{proof}

\subsection{Vanishing endpoint conditions for $S_i$}
Here, we consider a more general set-up than in the previous subsection. We assume again that $\X := [0,1]$ and let $\X_i:=[a_i,b_i]$, for $i\in\N_N$, be $N$ different subintervals of positive length. We further assume that $\{0 =: x_0 < x_1 < \ldots < x_{N-1} < x_N := 1\}$ is a partition of $\X$ and that we have chosen an enumeration in such a way that the mappings $u_i:\X_i\to \X$ satisfy
\[
u_i ([a_i, b_i]) := [x_{i-1}, x_i], \quad\forall\,i \in \N_N.
\]
In particular, note that $a_1 = x_0$, $b_N = x_N$, and $u_i(b_i) = x_i = u_{i+1}(a_{i+1})$, for all interior knots $x_1, \ldots, x_{N-1}$. We assume that the $u_i$ are affine functions but that they are not necessarily contractive.

Let
\be\label{Ia}
\mathscr{I} := \left\{(x_{j}, y_j)\st j = 0, 1, \ldots, N\right\}.
\ee
be a given set of interpolation points and let 
\be\label{111}
C_{\mathscr{I}} := \{f\in C \st f(x_{j}) = y_j, \,\forall\, j = 0, 1, \ldots, N\}.
\ee
Our objective in this subsection is to construct a local fractal function that belongs to $C_{\mathscr{I}}$ and which is generated by an RB operator of the form \eqref{eq3.4}. For this purpose, we need to impose continuity conditions at the interpolation points. More precisely, we require that for an $f\in C_{\mathscr{I}}$,
\be\label{condS}
\begin{split}
\Phi f(x_0)& = y_0, \quad \Phi f(x_N) = y_N,\\
\Phi f (x_i-) = y_i = &\,\Phi f (x_i+),\quad i = 1, \ldots, N-1.
\end{split}
\ee
Substituting the expression for $\Phi$ into these equations and simplifying yields
\begin{gather*}
\lambda_1 (x_0) + S_1(x_0) y_0 = y_0, \quad \lambda_N (x_N) + S_N(x_N) y_N = y_N\\
\lambda_i(b_i) + S_i(b_i) f(b_i) = y_i = \lambda_{i+1} (a_{i+1}) + S_{i+1}(a_{i+1}) f(a_{i+1}), \quad i = 1, \ldots, N-1.
\end{gather*}
Since these equation require unavailable knowledge of $f$ at the points $a_i$ and $b_i$, we impose the following vanishing endpoint conditions on the functions $S_i$:
\be\label{S}
S_i (a_i) = 0 = S_i (b_i), \,\forall i = 1, \ldots, N.
\ee
Thus the requirements on the functions $\lambda_i$ reduce to 
\begin{gather*}
\lambda_1 (x_0) = y_0, \quad \lambda_N (x_N) = y_N\\
\lambda_i(b_i) = y_i = \lambda_{i+1} (a_{i+1}), \quad i = 1, \ldots, N-1.
\end{gather*}
Function tuples $\blambda$ and $\bS$ satisfying \eqref{condS} and \eqref{S} are said to have property (S).

A class of functions $S_i$ for which conditions \eqref{S} hold is, for instance, the class of polynomial B-splines $B_n$ of order $2 < n\in \N$ centered at the midpoint of the interval $[a_i,b_i]$. Polynomial B-splines $B_n$ have even the property that all derivatives up to order $n-2$ vanish at the endpoints: $D^k B_n (a_i) = 0 = D^k B_n (b_i)$, for all $k = 0, 1\ldots, n-2$. 

The above considerations now entail the next theorem.

\begin{theorem}
Let $\X$ and $\X_i$, $i\in \N_N$, be as defined above. Let $\mathscr{I}$ be as in \eqref{Ia}. Suppose that $\blambda, \bS\in  \underset{i=1}{\overset{N}{\times}} C(\X_i)$ and that they have property (S). The RB operator \eqref{eq3.4} maps $C_\mathscr{I}$ as given by \eqref{111} into itself and is well defined. If in addition
\[
\max\left\{\|S_i\|_{\infty,\X_i}\st i\in\N_N\right\} < 1,
\]
then $\Phi$ is contractive on $C_\mathscr{I}$.
\end{theorem}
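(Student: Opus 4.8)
The plan is to establish two things: first, that $\Phi$ is well-defined and maps $C_\mathscr{I}$ into itself, and second, that under the stated bound on the $S_i$ it is a contraction. The first part is essentially bookkeeping already carried out in the discussion preceding the theorem, so I would simply observe that property (S)—namely the vanishing endpoint conditions \eqref{S} together with the interpolation constraints on the $\lambda_i$—is exactly what guarantees that $\Phi f$ takes the prescribed values $y_j$ at the knots $x_j$ and matches up continuously from the left and right at each interior knot. Since each $\lambda_i, S_i\in C(\X_i)$ and the $u_i$ are affine bijections onto $[x_{i-1},x_i]$, the pieces $(\lambda_i\circ u_i^{-1}) + (S_i\circ u_i^{-1})\cdot(f_i\circ u_i^{-1})$ are continuous on their respective intervals, and the join-up/interpolation conditions \eqref{condS} force continuity across the knots, so $\Phi f\in C_\mathscr{I}$.

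For contractivity, I would argue directly in the supremum norm, mirroring the $p=\infty$ case of Theorem \ref{thm7}. Given $f,g\in C_\mathscr{I}$, the $\lambda_i$ terms cancel in the difference $\Phi f-\Phi g$, leaving
\[
\Phi f - \Phi g = \sum_{i=1}^N (S_i\circ u_i^{-1})\cdot\big((f_i - g_i)\circ u_i^{-1}\big)\,\chi_{u_i(X_i)}.
\]
Because the sets $u_i(\X_i)=[x_{i-1},x_i]$ have disjoint interiors and cover $[0,1]$, the supremum over $x\in[0,1]$ splits as a maximum over $i$ of the supremum over each piece. On the $i$-th piece I would bound $|S_i\circ u_i^{-1}|$ by $\|S_i\|_{\infty,\X_i}$ and recognize that $\sup_{x\in u_i(\X_i)}|(f_i-g_i)(u_i^{-1}(x))| = \sup_{t\in\X_i}|f(t)-g(t)|\leq \|f-g\|_\infty$. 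Taking the maximum over $i$ then yields
\[
\|\Phi f - \Phi g\|_\infty \leq \Big(\max_{i\in\N_N}\|S_i\|_{\infty,\X_i}\Big)\,\|f-g\|_\infty,
\]
which is strictly contractive precisely under the hypothesis $\max_i\|S_i\|_{\infty,\X_i}<1$.

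The one point requiring genuine care—and the main obstacle—is that here the $u_i$ are \emph{not assumed contractive}, unlike in Theorem \ref{thm8}. I would stress that this causes no difficulty for the $L^\infty$ estimate: the change of variables $x=u_i(t)$ in a supremum is an exact identity, introducing no Lipschitz or Jacobian factor (in contrast to the $L^p$ computation in Theorem \ref{thm7}, where the factor $a_i$ appears). Thus the contractivity constant depends only on the $\|S_i\|_{\infty,\X_i}$ and not on the expansion or contraction behavior of the $u_i$. The remaining subtlety is that $C_\mathscr{I}$ is a closed affine subspace of the complete space $(C,\|\bullet\|_\infty)$, hence itself complete, so the Banach Fixed Point Theorem applies and produces the unique fixed point $\ff\in C_\mathscr{I}$; I would note that the difference of two elements of $C_\mathscr{I}$ vanishes at every knot, which is consistent with the cancellation of the $\lambda_i$ observed above.
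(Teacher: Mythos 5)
Your proposal is correct and follows essentially the same route as the paper: well-definedness and invariance of $C_\mathscr{I}$ come from property (S) via the discussion preceding the theorem, and contractivity is exactly the $p=\infty$ sup-norm estimate from the proof of Theorem \ref{thm7}, which the paper simply cites and you write out in full. Your added observations—that the change of variables in a supremum introduces no factor $a_i$ (so non-contractive $u_i$ are harmless) and that $C_\mathscr{I}$ is closed in $(C,\|\bullet\|_\infty)$ hence complete—are correct and make explicit what the paper leaves implicit.
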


The fixed point $\ff$ of $\Phi$ is called again a \textbf{continuous local fractal interpolation function}.

\begin{proof}
The assumptions on $\blambda$ and $\bS$ guarantee that $\Phi$ is well defined and maps $C_\mathscr{I}$ into itself. The contractivity of $\Phi$ under the given condition follows immediately from the proof of Theorem \ref{thm7}.
\end{proof}

For the particular setting at hand, one may, of course, also construct fractal functions of class $\dot{C}^s$ and $C^n$ by imposing the relevant conditions on the function tuples $\blambda$ and $\bS$ and choose the appropriate interpolation sets. We rely on the diligent reader to provide these conditions and prove the corresponding results. 
\section{Sobolev Spaces $W^{m,p}$}\label{sec8}
The final type of function space we consider are the Sobolev spaces $W^{m,p}$ with $m\in \N_0$ and $1\leq p \leq \infty$. To this end, let $\Omega\subset\R$ be open and 
$$
C^{m,p}(\Omega) :=\{f\in C^\infty (\Omega)\st D^k f\in L^p(\Omega),\,\forall\,k=0,1,\ldots, m\}.
$$
Define functionals $\|\bullet\|_{m,p}$, $m\in \N_0$ and $1\leq p \leq \infty$, as follows:
\[
\|f\|_{m,p} := \begin{cases}
\displaystyle{\left(\sum_{k=0}^m \|D^k f\|^p_{L^p}\right)^{1/p}}, & 1\leq p < \infty;\\ \\
\displaystyle{\max_{k\in \{0,1,\ldots, m\}}\{\|D^kf\|_\infty\}}, & p = \infty.
\end{cases}
\]
The closure of $C^{m,p}(\Omega)$ in the norm $\|\bullet\|_{m,p}$ produces the Sobolev space $W^{m,p}(\Omega)$. The ordinary derivatives $D^k$ in $C^{m,p}(\Omega)$ have a continuous extension to $W^m(L^p)(\Omega)$. These extensions are then the weak derivatives $D^{(k)}$. The Sobolev space $W^{m,p}(\Omega)$ is a Banach spaces when endowed with the norm $\|\bullet\|_{m,p}$. For more details, we refer the reader to \cite{A}.

Now suppose $X := (0,1)$ and $\{X_i\st i\in \N_N\}$ is a collection of nonempty open intervals of $X$. Further suppose that $\{x_1 < \cdots < x_{N-1}\}$ is a partition of $X$ and that $\{u_i:X_i\to X\}$ is a family of affine mappings with the property that
$u_i(X_i) = (x_{i-1}, x_i)$, for all $i\in \N_N$, where we set $x_0:= 0$ and $x_N:= 1$. We write $W^{m,p}$ for $W^{m,p}(X)$.

\begin{theorem}
Under the assumptions stated above, let $\blambda\in \underset{i=1}{\overset{N}{\times}} W^{m,p}(X_i)$ and let $\bS := (s_1, \ldots, s_N)\in \R^N$. Then the RB operator $\Phi: W^{m,p}\to \R^{(0,1)}$,  $m\in \N_0$ and $1\leq p \leq \infty$, defined by
\[
\Phi g := \sum_{i=1}^N (\lambda_i \circ u_i^{-1})\,\chi_{u_i(X_i)} + \sum_{i=1}^N s_i (g_i\circ u_i^{-1})\,\chi_{u_i(X_i)},
\]
has range contained in $W^{m,p}$ and is well defined. Moreover, if
\be\label{W}
\begin{cases}
\displaystyle{\left(\max_{k\in \{0,1,\ldots,m\}} \sum_{i\in \N_N} \frac{|s_i|^p}{a_i^{kp -1}}\right)^{1/p} < 1}, & 1\leq p < \infty;\\
\displaystyle{\sum_{i\in \N_N} \frac{|s_i|}{a_i^k} < 1}, & p = \infty,
\end{cases}
\ee
then $\Phi$ is contractive on $W^{m,p}$.
\end{theorem}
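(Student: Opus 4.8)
The plan is to reduce the whole statement to a single affine change of variables together with the observation, special to this theorem, that the $s_i$ are \emph{constants}, so no Leibniz expansion (as in the $C^n$ case) is needed. First I would record that since the image intervals $u_i(X_i) = (x_{i-1}, x_i)$ partition $(0,1)$, the characteristic functions $\chi_{u_i(X_i)}$ have disjoint supports and $\Phi g$ is unambiguously defined; on the $i$-th image interval it equals $\lambda_i\circ u_i^{-1} + s_i\,(g\circ u_i^{-1})$. Because $u_i$ is affine with Lipschitz constant $a_i$, its inverse has slope $a_i^{-1}$, so composition with $u_i^{-1}$ carries $W^{m,p}(X_i)$ into $W^{m,p}(x_{i-1},x_i)$ with the chain rule $D^{(k)}(\phi\circ u_i^{-1}) = a_i^{-k}\,(D^{(k)}\phi)\circ u_i^{-1}$ for $k = 0,\ldots,m$. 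Since $g\in W^{m,p}$ restricts to $W^{m,p}(X_i)$ and $\lambda_i\in W^{m,p}(X_i)$, each piece of $\Phi g$ lies in $W^{m,p}(x_{i-1},x_i)$, which gives well-definedness piecewise.

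The contraction estimate for $1\le p < \infty$ is the heart of the argument. In the difference $\Phi g - \Phi h$ the $\lambda_i$-terms cancel, leaving $\Phi g - \Phi h = \sum_{i=1}^N s_i\,((g_i - h_i)\circ u_i^{-1})\,\chi_{u_i(X_i)}$. Differentiating piecewise and using the affine chain rule, on $(x_{i-1},x_i)$ we get $D^{(k)}(\Phi g - \Phi h) = s_i\,a_i^{-k}\,(D^{(k)}(g_i - h_i))\circ u_i^{-1}$. Integrating $|\,\cdot\,|^p$ over $(x_{i-1},x_i)$ and substituting $x = u_i(t)$, so that $dx = a_i\,dt$, yields
\[
\|D^{(k)}(\Phi g - \Phi h)\|_{L^p}^p = \sum_{i=1}^N \frac{|s_i|^p}{a_i^{kp-1}}\,\|D^{(k)}(g_i - h_i)\|_{L^p(X_i)}^p .
\]
The delicate step is that the local domains $X_i$ may overlap, so these per-piece integrals are \emph{not} additive and cannot be collapsed into a single integral over $X$. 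Instead I would dominate each one by the full-domain quantity, $\|D^{(k)}(g_i - h_i)\|_{L^p(X_i)}^p \le \|D^{(k)}(g - h)\|_{L^p}^p$, which is valid since $X_i\subseteq X$ and the integrand is nonnegative. Summing over $k = 0,\ldots,m$ and pulling out the largest coefficient then gives
\[
\|\Phi g - \Phi h\|_{m,p}^p \le \Big(\max_{0\le k\le m}\sum_{i=1}^N \frac{|s_i|^p}{a_i^{kp-1}}\Big)\,\|g - h\|_{m,p}^p ,
\]
which is precisely the first line of \eqref{W} raised to the power $p$. For $p = \infty$ the images again partition $(0,1)$, so $\|D^{(k)}(\Phi g - \Phi h)\|_\infty = \max_i a_i^{-k}|s_i|\,\|D^{(k)}(g_i - h_i)\|_{\infty,X_i}$; bounding each local norm by $\|g - h\|_{m,\infty}$ and noting that the stated sum $\sum_i |s_i|/a_i^k$ dominates this maximum shows the second line of \eqref{W} is a sufficient contraction condition. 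In every case $\Phi$ is a contraction on the complete space $W^{m,p}$, and the Banach Fixed Point Theorem supplies the unique fixed point.

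The main obstacle I anticipate is not the contractivity but the \emph{global} regularity asserted by "has range contained in $W^{m,p}$" when $m\ge 1$: a function in $W^{m,p}(0,1)$ has an absolutely continuous representative, whereas the piecewise-defined $\Phi g$ lies in $W^{m,p}(0,1)$ only if its pieces match to order $m-1$ at the interior knots $x_1,\ldots,x_{N-1}$. I would secure this by imposing join-up conditions on $\blambda$ at the knots, exactly analogous to the $C^n$-matching conditions \eqref{diffcon} of the preceding section, and fold them into the standing hypotheses; these are what render "well defined" meaningful for $m\ge 1$. The overlap of the local domains $X_i$, handled above by the monotone bound rather than by additivity, is the second point requiring care.
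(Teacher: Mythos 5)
Your proof follows the paper's argument essentially verbatim: the same affine change of variables producing the factor $|s_i|^p/a_i^{kp-1}$, the same domination of each local integral over $X_i$ by the global $L^p$-norm (the paper likewise uses this bound rather than additivity, since the $X_i$ may overlap), and the same max-over-$k$ conclusion, with the analogous pointwise estimate for $p=\infty$. Your closing caveat --- that for $m\ge 1$ membership of $\Phi g$ in $W^{m,p}(0,1)$ requires the pieces to match to order $m-1$ at the interior knots $x_1,\ldots,x_{N-1}$, which the stated hypotheses alone do not guarantee --- is a legitimate observation about a point the paper's one-line well-definedness argument glosses over, not a defect of your proof.
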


The unique fixed point $\ff$ of $\Phi$ is called a \textbf{local fractal function of class $W^{m,p}$}.

\begin{proof}
That $\Phi$ is well defined and has range contained in $W^{m,p}$ follows from the assumption on the function tuple $\blambda$ and the fact that if the weak derivative of a function $f$ exits and $u_i$ is a diffeomorphism, then the weak derivative of $f\circ u_i^{-1}$ exists and equals $(D^{(1)}f)(u_i^{-1})\cdot D u_i^{-1}$.

To prove contractivity on $W^{m,p}$, suppose that $g,h\in W^{m,p}$, $k\in \{0,1,\ldots, m\}$. Denote the ordinary derivative of $u_i$ by $a_i$. Note that $a_i > 0$ but may be larger than one. Then, for $1\leq p < \infty$, we obtain the following estimates.
\begin{align*}
\|D^{(k)}\Phi g - D^{(k)}\Phi h\|_{L^p}^p & = \int_X \left\vert D^{(k)}\sum_{i\in \N_N} s_i (g_i - h_i)(u_i^{-1})(x)\right\vert^p \chi_{u_i(X_i)} dx \\
& \leq \sum_{i\in \N_N} |s_i|^p \int_{u_i(X_i)} \left\vert D^{(k)}(g_i - h_i)(u_i^{-1}(x))\right\vert^p\left(\frac{1}{a_i}\right)^{k p} dx\\
& \leq \sum_{i\in \N_N} |s_i|^p \left(\frac{1}{a_i}\right)^{k p-1} \int_{X_i} \left\vert D^{(k)}(g_i - h_i)(x)\right\vert^p dx\\
& \leq \left(\sum_{i\in \N_N} |s_i|^p \left(\frac{1}{a_i}\right)^{k p-1}\right) \|D^{(k)} g - D^{(k)} h\|_{L^p}^p.
\end{align*}
Summing over $k = 0,1,\ldots, m$, and factoring out the maximum value of the expression in parentheses,  proves the statement.

Similarly, for $p=\infty$, we get
\begin{align*}
\left\vert D^{(k)} g (x) - D^{(k)} h (x)\right\vert & = \left\vert\sum_{i\in \N_N} s_i D^{(k)} (g_i - h_i) (u_i^{-1})(x) \left(\frac{1}{a_i^k}\right)\chi_{u_i(X_i)}(x)\right\vert\\
& \leq \sum_{i\in \N_N} \frac{|s_i|}{a_i^k} \left\vert D^{(k)} (g_i - h_i) (u_i^{-1})(x)\chi_{u_i(X_i)}(x)\right\vert\\
& \leq \sum_{i\in \N_N} \frac{|s_i|}{a_i^k} \left\| D^{(k)} g - D^{(k)} h \right\|_{\infty},
\end{align*}
verifying the assertion.
\end{proof}

\section*{Acknowledgment}
The author wishes to thank the Mathematical Sciences Institute of The Australian National University for its kind hospitality and support during his research visit in May 2013 which initiated the investigation into local IFSs.

\end{document}